\title{ECH capacities and the Ruelle invariant}
\author{Michael Hutchings}
\date{}
\newcommand{\mc}[1]{{\mathcal #1}}
\numberwithin{equation}{section}
\newtheorem{theorem}{Theorem}[section]
\newtheorem{proposition}[theorem]{Proposition}
\newtheorem{corollary}[theorem]{Corollary}
\newtheorem{lemma}[theorem]{Lemma}
\newtheorem{lemma-definition}[theorem]{Lemma-Definition}
\newtheorem{conjecture}[theorem]{Conjecture}
\theoremstyle{definition}
\newtheorem{definition}[theorem]{Definition}
\newtheorem{remark}[theorem]{Remark}
\newtheorem{example}[theorem]{Example}
\newcommand{\floor}[1]{\left\lfloor #1 \right\rfloor}
\newcommand{\ceil}[1]{\left\lceil #1 \right\rceil}
\newcommand{\C}{{\mathbb C}}
\newcommand{\Q}{{\mathbb Q}}
\newcommand{\R}{{\mathbb R}}
\newcommand{\Z}{{\mathbb Z}}
\newcommand{\op}{\operatorname}
\newcommand{\bpm}{\begin{pmatrix}}
\newcommand{\epm}{\end{pmatrix}}
\renewcommand{\epsilon}{\varepsilon}
\begin{document}

\maketitle

\begin{abstract}
The ECH capacities are a sequence of real numbers associated to any symplectic four-manifold, which are monotone with respect to symplectic embeddings. It is known that for a compact star-shaped domain in $\R^4$, the ECH capacities asymptotically recover the volume of the domain. We conjecture, with a heuristic argument, that generically the error term in this asymptotic formula converges to a constant determined by a ``Ruelle invariant'' which measures the average rotation of the Reeb flow on the boundary. Our main result is a proof of this conjecture for a large class of toric domains. As a corollary, we obtain a general obstruction to symplectic embeddings of open toric domains with the same volume. For more general domains in $\R^4$, we bound the error term with an improvement on the previously known exponent from $2/5$ to $1/4$.
\end{abstract}

\setcounter{tocdepth}{2}

\section{Introduction}

\subsection{Asymptotics of ECH capacities}

Given a symplectic $4$-manifold $(X,\omega)$, possibly noncompact or with boundary, there is associated a sequence of real numbers
\begin{equation}
\label{eqn:increasing}
0 = c_0(X,\omega) < c_1(X,\omega) \le c_2(X,\omega) \le \cdots \le\infty,
\end{equation}
called the {\em ECH capacities\/} of $(X,\omega)$. These were defined in \cite{qech} using embedded contact homology;
see \cite{bn} for a survey. Some basic properties of ECH capacities proved in \cite{qech} are:
\begin{itemize}
\item
(Monotonicity) If there exists a symplectic embedding of $(X,\omega)$ into $(X',\omega')$ then \begin{equation}
\label{eqn:monotone}
c_k(X,\omega) \le c_k(X',\omega')
\end{equation}
for all $k$.
\item
(Conformality) If $r>0$ then
\begin{equation}
\label{eqn:conformality}
c_k(X,r\omega) = rc_k(X,\omega).
\end{equation}
\item
(Disjoint unions)
Given a (possibly finite) sequence\footnote{In \cite{qech} it was assumed that the sequence of symplectic manifolds $\{(X_i,\omega_i)\}$ is finite, and in that case one has `max' instead of `sup' in \eqref{eqn:du}. The countable case follows directly from the finite case using the definition of ECH capacities in \cite{qech}.} of symplectic $4$-manifolds $\{(X_i,\omega_i)\}$, we have
\begin{equation}
\label{eqn:du}
c_k\left(\coprod_i(X_i,\omega_i)\right) = \sup_{\sum_ik_i=k}\sum_ic_{k_i}(X_i,\omega_i).
\end{equation}
\item
(Balls)
If $a>0$, define the ball
\[
B(a) = \left\{z\in\C^2\;\big|\; \pi|z|^2\le a\right\}.
\]
Then
\begin{equation}
\label{eqn:ckball}
c_k(B(a)) = da
\end{equation}
where $d$ is the unique nonnegative integer such that
\[
d^2+d\le 2k\le d^2+3d.
\]
\item
(Volume property)
If $X$ is a compact domain in $\R^4$ with piecewise smooth boundary, then
\begin{equation}
\label{eqn:vp}
\lim_{k\to\infty} \frac{c_k(X)^2}{k} = 4\op{vol}(X).
\end{equation}
\end{itemize}
Here for domains in $\R^4=\C^2$ we always take the restriction of the standard symplectic form
\[
\omega = \sum_{i=1}^2dx_i\,dy_i.
\]

The symplectic embedding obstructions resulting from the monotonicity property \eqref{eqn:monotone} are sharp in some cases, for example when $X$ and $X'$ are ellipsoids in $\R^4$, as shown by McDuff \cite{mcd}, or more generally when $X$ is a ``concave toric domain'' and $X'$ is a ``convex toric domain'', as shown by Cristofaro-Gardiner \cite{cg}. 

Define a ``nice star-shaped domain'' to be a compact domain in $\R^4$ whose boundary is smooth and transverse to the radial vector field.
If $X$ is a nice star-shaped domain, then the asymptotic formula \eqref{eqn:vp} is a special case of a more general result about the asymptotics of the ``ECH spectrum'' of a contact three-manifold, which was proved in \cite{vc} using Seiberg-Witten theory. The formula \eqref{eqn:vp} for nice star-shaped domains corresponds to the case when the contact three-manifold is the boundary of $X$, which of course is diffeomorphic to $S^3$, together with an induced contact form (see \eqref{eqn:Liouville} below) whose kernel is the tight contact structure.

The ECH spectrum of a contact three-manifold is defined in terms of the periods of certain Reeb orbits, and as a result the asymptotic formula for the ECH spectrum has various applications to dynamics. In particular, \cite{two} deduces the existence of at least two simple Reeb orbits; \cite{infinity} proves the existence of either two or infinitely many simple Reeb orbits under certain hypotheses; \cite{irie1,ai} obtain $C^\infty$ generic density of Reeb orbits and periodic orbits of Hamiltonian surface diffeomorphisms, see also the survey \cite{humiliere}; and \cite{calabi,weiler} obtain relations between periodic orbits of area preserving disk or annulus diffeomorphisms and the Calabi invariant.

Returning to symplectic embedding problems, the asymptotic formula \eqref{eqn:vp} implies that for $k$ large, the symplectic embedding obstruction \eqref{eqn:monotone} recovers the obvious volume constraint $\op{vol}(X)\le \op{vol}(X')$. Additional embedding obstructions arise from the deviation of $c_k(X)$ from the asymptotics in \eqref{eqn:vp}. More precisely, define the ``error term''
\begin{equation}
\label{eqn:errorterm}
e_k(X) = c_k(X) - 2\sqrt{k\op{vol}(X)}
\end{equation}
It is then interesting to try to understand the size of this error term and its geometric significance.

A result of Sun \cite{sun} implies that if $X$ is a nice star-shaped domain, then
\[
e_k(X) = O\left(k^{125/252}\right).
\]
The exponent was improved by Cristofaro-Gardiner and Savale \cite{cgs} to $2/5$. Both of these results for nice star-shaped domains are special cases of general results on the asymptotics of the ECH spectrum of a contact three-manifold, proved using Seiberg-Witten theory.

We use more elementary arguments to further improve the exponent for domains in $\R^4$:

\begin{theorem}
\label{thm:exponent}
(proved in \S\ref{sec:exponent})
If $X$ is a compact domain in $\R^4$ with smooth boundary (not necessarily star-shaped), then
\[
e_k(X) = O\left(k^{1/4}\right).
\]
\end{theorem}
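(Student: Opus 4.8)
The goal is to bound $e_k(X) = c_k(X) - 2\sqrt{k\op{vol}(X)}$ by $O(k^{1/4})$. The strategy I would pursue is to squeeze $X$ between two scaled copies of a "nice" comparison region, using the monotonicity and conformality properties of ECH capacities together with good control on $c_k$ for the comparison region.

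**Key steps.**

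First, reduce to a local picture. Since $X$ is compact with smooth boundary (not necessarily star-shaped), cover $\partial X$ by finitely many coordinate charts in which a neighborhood of the boundary looks like a product or a simple model, and use the disjoint unions formula \eqref{eqn:du} together with a decomposition of $X$ into finitely many pieces: a large interior piece that is (after rescaling) a union of many small cubes, plus a collar near the boundary. Because ECH capacities of a disjoint union combine via a sup of sums \eqref{eqn:du}, and because $c_k$ of a small cube of side $\delta$ behaves like that of a ball of comparable volume — so $c_k$ of a cube is $\asymp \delta\sqrt{k}$ with controlled lower-order terms by \eqref{eqn:ckball} — tiling a domain by $N\asymp \op{vol}(X)/\delta^4$ cubes and optimizing the partition $\sum k_i = k$ gives $c_k \approx 2\sqrt{k\op{vol}(X)} + (\text{error depending on }\delta\text{ and the collar width})$.

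Second, estimate the two error contributions. The cube-tiling introduces an error of order $\delta\sqrt{k}$ from the granularity of the ball formula (the "staircase" in \eqref{eqn:ckball} has steps of size $\asymp\delta$), while omitting/including the boundary collar of width $w$ changes the volume by $O(w)$ hence changes $2\sqrt{k\op{vol}}$ by $O(w\sqrt{k})$; additionally one needs the collar itself to embed into, and to contain, standard pieces, which costs another controlled term. Balancing $\delta$ against the number of cubes — more precisely, choosing $\delta \asymp k^{-1/4}$ so that $\delta\sqrt k \asymp k^{1/4}$ while the number of cubes $N\asymp \delta^{-4}\asymp \sqrt{k}$ stays well below $k$ (so each cube can be assigned $k_i\asymp k/N\asymp\sqrt k$ "quanta", large enough for the ball asymptotics to kick in with error $o(\sqrt{k_i})$ per cube) — yields the exponent $1/4$. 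The key inequality is that for $X \subseteq X'$ one has $c_k(X)\le c_k(X')$, applied in both directions after rescaling the comparison union by $(1\pm\eps)$.

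**Main obstacle.** The delicate point is making the two-sided sandwiching work simultaneously: one needs a finite union of disjoint cubes (or balls) that embeds symplectically into $X$ and recovers nearly the full volume, AND a finite union that contains $X$, with the error in both directions controlled by the same $\delta$. Packing cubes/balls disjointly into an arbitrary smooth domain with volume loss $O(\delta)$ is geometrically routine, but controlling the outer containment — covering $X$ by boxes whose total volume exceeds $\op{vol}(X)$ by only $O(\delta)$, which requires handling the curved boundary — is where the smoothness hypothesis and a quantitative estimate on the measure of a $\delta$-neighborhood of $\partial X$ (bounded by $C(\partial X)\,\delta$) are essential. Once this geometric input is in place, assembling the capacity estimate via \eqref{eqn:monotone}, \eqref{eqn:conformality}, \eqref{eqn:du}, and \eqref{eqn:ckball}, and then optimizing in $\delta$, is a short computation.
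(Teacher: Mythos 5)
There is a genuine gap here: a single-scale cube tiling cannot reach the exponent $1/4$, and the numerology in your balancing step does not close. A cube of side $s$ in $\R^4$ is symplectomorphic to the polydisk $P(s^2,s^2)$, whose $k$-th capacity deviates from $2s^2\sqrt{k}$ by an additive error of order $s^2$ uniformly in $k$. So if you tile with $N\asymp \op{vol}(X)\,s^{-4}$ congruent cubes of side $s$, the total granularity error is $\sum_i O(s^2)\asymp \op{vol}(X)\,s^{-2}$, while the uncovered collar of width $\asymp s$ along $\partial X$ costs $O(s\sqrt{k})$ in the volume term. Balancing $s^{-2}$ against $s\sqrt{k}$ forces $s\asymp k^{-1/6}$ and yields only $e_k(X)\ge -Ck^{1/3}$ --- an improvement on $2/5$, but short of $1/4$. (Your own count is also off: with side $\delta\asymp k^{-1/4}$ one has $N\asymp\delta^{-4}\asymp k$, not $\sqrt{k}$, so each cube receives $k_i\asymp 1$ and the asymptotics of \eqref{eqn:ckball} never kick in; if instead $\delta$ denotes the symplectic width $s^2$, the collar term becomes $O(k^{3/8})$ and again dominates $k^{1/4}$.) The missing idea is a multi-scale, Whitney-type decomposition: fill the bulk with cubes of side $2^{-1}$, then fill what remains with cubes of side $2^{-2}$, and so on. Smoothness of $\partial X$ guarantees at most $C\cdot 8^{j}$ cubes at generation $j$, since they all lie in a $2^{-j}$-collar of $\partial X$ of volume $O(2^{-j})$; hence after $n$ generations the total granularity error is $\sum_{j\le n}8^{j}\cdot 4^{-j}\asymp 2^{n}$ while the uncovered volume is only $O(2^{-n})$. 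Taking $k\asymp 16^{n}$ balances $2^{n}$ against $2^{-n}\sqrt{k}$ and gives exactly $k^{1/4}$. The point is that the large cubes cover almost all of the volume while contributing almost nothing to $\sum_i a_i$, which is precisely what a single-scale tiling forfeits.

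A secondary problem is the upper bound. Covering $X$ by boxes whose total volume barely exceeds $\op{vol}(X)$ gives no bound on $c_k(X)$: ECH capacities are not subadditive over coverings, monotonicity \eqref{eqn:monotone} requires a symplectic embedding of $X$ into the comparison region, and a union of overlapping boxes is not a region whose capacities you can compute (while disjoint boxes cannot cover a connected $X$). The correct device, following \cite[Prop.\ 8.6]{qech}, is the complement trick: place $X$ in a large cube $W$, pack $W\setminus\op{int}(X)$ with a multi-scale family of cubes as above, and use \eqref{eqn:du} together with monotonicity in the form $c_{k+k'}(W)\ge c_k(X)+c_{k'}\bigl(\coprod_i(\text{cubes in }W\setminus X)\bigr)$ to bound $c_k(X)$ from above.
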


In fact, $e_k(X)$ is $O(1)$ in all examples for which it has been computed.

\begin{example}
\label{ex:ball}
Let $X$ be the ball $B(a)$. We have $\op{vol}(B(a))=a^2/2$, see \eqref{eqn:volarea} below. By \eqref{eqn:ckball}, we then have
\[
e_k(B(a)) = \left(d-\sqrt{2k}\right)a,
\]
where $d$ is the unique nonnegative integer such that
\[
d^2+d\le 2k\le d^2+3d.
\]
It follows from the above two lines that
\begin{align}
\label{eqn:deviation}
\lim\inf_{k\to\infty} e_k(B(a)) &= -\frac{3}{2}a,\\
\nonumber
\lim\sup_{k\to\infty} e_k(B(a)) &= -\frac{1}{2}a.
\end{align}
\end{example}

More generally, \cite[Thm.\ 1.1]{benw} implies that for certain ``lattice convex toric domains'', $e_k$ is also $O(1)$ with a more complicated oscillating behavior.

\subsection{The Ruelle invariant}

We now formulate a general conjecture about the limiting behavior of the error term $e_k$. This requires a digression to define the ``Ruelle invariant'' of a contact form on a homology three-sphere, which can be regarded as a measure of the average rotation rate of the Reeb flow. (One can also define the Ruelle invariant more generally for volume-preserving vector fields.)

Let $\widetilde{\op{Sp}}(2)$ denote the universal cover of the group $\op{Sp}(2)$ of $2\times 2$ symplectic matrices. There is a standard ``rotation number'' function
\[
\op{rot}:\widetilde{\op{Sp}}(2)\longrightarrow \R
\]
defined as follows. Let $A\in\op{Sp}(2)$, and let $\widetilde{A}\in\widetilde{\op{Sp}}(2)$ be a lift of $A$, represented by a path $\{A_t\}_{t\in[0,1]}$ in $\op{Sp}(2)$ with $A_0=I$ and $A_1=A$. If $v$ is a nonzero vector in $\R^2$, then the path of vectors $\{A_tv\}_{t\in[0,1]}$ rotates by some angle which we denote by $2\pi\rho(v)\in\R$. We then define
\[
\op{rot}\left(\widetilde{A}\right) = \lim_{n\to\infty}\frac{1}{n}\sum_{k=1}^n\rho\left(A^{k-1}v\right).
\]
This does not depend on the choice of nonzero vector $v$. For example, if $A$ is conjugate to rotation by angle $2\pi\theta$, then $\op{rot}\left(\widetilde{A}\right)$ is a lift of $\theta$ from $\R/2\pi\Z$ to $\R$. The rotation number is a quasimorphism: if $\widetilde{B}$ is another element of $\widetilde{\op{Sp}}(2)$, then
\begin{equation}
\label{eqn:quasimorphism}
\left|\op{rot}\left(\widetilde{A}\widetilde{B}\right) - \op{rot}\left(\widetilde{A}\right) - \op{rot}\left(\widetilde{B}\right)\right| < 1.
\end{equation}

Now let $Y$ be a homology three-sphere, and let $\lambda$ be a contact form on $Y$ with associated contact structure $\xi$ and Reeb vector field $R$. For $t\in\R$, let $\phi_t:Y\to Y$ denote the diffeomorphism given by the time $t$ Reeb flow. For each $y\in Y$, the derivative of $\phi_t$ restricts to a linear map
\begin{equation}
\label{eqn:dphit}
d\phi_t:\xi_y \longrightarrow \xi_{\phi_t(y)}
\end{equation}
which is symplectic with respect to $d\lambda$. Now fix a symplectic trivialization of $\xi$, consisting of a symplectic linear map $\tau:\xi_y\to\R^2$ for each $y\in Y$. Then for $y\in Y$ and $t\in\R$, the composition
\[
\R^2\stackrel{\tau^{-1}}{\longrightarrow} \xi_y \stackrel{d\phi_t}{\longrightarrow} \xi_{\phi_t(y)} \stackrel{\tau}{\longrightarrow} \R^2
\]
is a symplectic matrix which we denote by $A^\tau_{y,t}$. In particular, if $y\in Y$ and $T\ge 0$, then the path of symplectic matrices $\{A^\tau_{y,t}\}_{t\in[0,T]}$ defines an element of $\widetilde{\op{Sp}}(2)$. We denote its rotation number by
\[
\op{rot}_\tau(y,T) = \op{rot}\left(\{A^\tau_{y,t}\}_{t\in[0,T]}\right) \in \R.
\]

As explained by Ruelle \cite{ruelle}, see also \cite[\S3.2]{gg}, one can use the quasimorphism property \eqref{eqn:quasimorphism} to show that for almost all $y\in Y$, the limit
\[
\rho(y)=
\lim_{T\to\infty}\frac{1}{T}\op{rot}_\tau(y,T)
\]
is well defined and independent of $\tau$, and the function $\rho$ is integrable.

\begin{definition}
If $Y$ is a homology three-sphere and $\lambda$ is a contact form on $Y$, define the {\em Ruelle invariant\/}
\begin{equation}
\label{eqn:Rurot}
\op{Ru}(Y,\lambda) = \int_Y\rho\,\lambda\wedge d\lambda.
\end{equation}
\end{definition}

If $X$ is a nice star-shaped domain in $\R^4$, then the standard Liouville form
\begin{equation}
\label{eqn:Liouville}
\lambda_0 = \frac{1}{2}\sum_{i=1}^2\left(x_i\,dy_i - y_i\,dx_i\right)
\end{equation}
restricts to a contact form on $\partial X$.

\begin{definition}
If $X$ is a nice star-shaped domain in $\R^4$, then we define
\[
\op{Ru}(X) = \op{Ru}\left(\partial X,{\lambda_0}|_{\partial X}\right).
\]
\end{definition}

We can now state our main conjecture:

\begin{conjecture}
\label{conj:main}
If $X$ is a generic nice star-shaped domain in $\R^4$, then
\begin{equation}
\label{eqn:conj}
\boxed{
\lim_{k\to \infty}e_k(X) = -\frac{1}{2}\op{Ru}(X).
}
\end{equation}
\end{conjecture}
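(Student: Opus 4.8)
The plan is to deduce \eqref{eqn:conj} from a counting description of the ECH capacities together with an Ehrhart–type refinement of the volume law whose linear term is the Ruelle invariant; I will first give the heuristic and then isolate the points that would have to be made rigorous.

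\emph{Reduction to the ECH spectrum and counting.} For a nice star-shaped domain $X$, the definition of ECH capacities in \cite{qech} gives $c_k(X) = c_k^{\op{ECH}}(\partial X,{\lambda_0}|_{\partial X})$, the ECH spectrum of the induced contact form on $\partial X\cong S^3$ (whose kernel is the tight contact structure). For generic $X$ all Reeb orbits of ${\lambda_0}|_{\partial X}$ are nondegenerate, so the ECH chain complex is generated by admissible orbit sets $\alpha=\{(\gamma_i,m_i)\}$, graded by the ECH index $I(\alpha)$ and filtered by the action $\A(\alpha)=\sum_im_i\int_{\gamma_i}\lambda_0$. Since $\op{ECH}_*(\partial X,{\lambda_0}|_{\partial X})\cong\Z$ for $*\in 2\Z_{\ge0}$ (and vanishes otherwise), with $U$ an isomorphism in positive gradings, $c_k(X)$ is the least action of a cycle representing the generator of $\op{ECH}_{2k}$. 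Ignoring for the moment the cancellations that occur in passing from chains to homology, together with the conditions $\partial\alpha=0$ and $U^k\ne0$, this says that $c_k(X)$ is approximately the action of the orbit set whose rank in the increasing action ordering is $k$; equivalently, writing $N_X(L):=\#\{\alpha\text{ admissible}:\A(\alpha)<L\}$, one expects $c_k(X)\approx N_X^{-1}(k)$.

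\emph{The refined count.} Set $V:=\int_{\partial X}\lambda_0\wedge d\lambda_0=2\op{vol}(X)$, so that the volume property of ECH \cite{vc} is equivalent to $N_X(L)=\frac{L^2}{2V}+o(L^2)$. The key claim is that for generic $X$,
\[
N_X(L)=\frac{L^2+\op{Ru}(X)\,L}{2V}+o(L).
\]
Granting this, solving $N_X(c_k)=k+O(1)$ gives $c_k(X)=-\tfrac12\op{Ru}(X)+\sqrt{2Vk}+o(1)$; since $2\sqrt{k\op{vol}(X)}=\sqrt{2Vk}$ this yields $e_k(X)\to-\tfrac12\op{Ru}(X)$, which is \eqref{eqn:conj}. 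As a sanity check, for the ellipsoid $X=E(a,b)=\{(z_1,z_2)\in\C^2:\pi|z_1|^2/a+\pi|z_2|^2/b\le1\}$ with $a/b$ irrational, $N_X(L)=\#\{(m,n)\in\Z^2_{\ge0}:ma+nb<L\}$ counts lattice points in a right triangle with legs $L/a$ and $L/b$, which by Pick's theorem equals $\frac{L^2}{2ab}+\tfrac12(\tfrac La+\tfrac Lb)+O(1)$; since $V=ab$ and $\op{Ru}(E(a,b))=a+b$, this is exactly the asserted formula, consistent with $e_k(E(a,b))\to-\tfrac{a+b}{2}$ from Example \ref{ex:ball}-type reasoning.

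\emph{Why the Ruelle invariant appears.} To establish the linear term one would expand the ECH index, $I(\alpha)=c_\tau(\alpha)+Q_\tau(\alpha)+\sum_i\sum_{j=1}^{m_i}\CZ_\tau(\gamma_i^j)$, relative to a trivialization $\tau$ of $\xi$, and compare $I(\alpha)$ with $2N_X(\A(\alpha))$. The sub-leading discrepancy is governed by the total Conley–Zehnder term: for a long orbit segment the quantity $\CZ_\tau$ differs by a bounded amount from twice the rotation number $\op{rot}_\tau$ of the linearized Reeb flow, and along a family of orbit sets that equidistributes with respect to $\lambda_0\wedge d\lambda_0$ the action-weighted average of these rotation numbers converges, by Ruelle's pointwise theorem, to $\int_{\partial X}\rho\,\lambda_0\wedge d\lambda_0=\op{Ru}(X)$ (compare \eqref{eqn:Rurot}). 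Combined with the matching contribution of the relative Chern class $c_\tau$, this should produce precisely the correction $\op{Ru}(X)\,L/(2V)$.

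\emph{The main obstacle.} The hard part is legitimizing each step for an arbitrary generic $X$. One must (i) show that the homological cancellations and the constraints $\partial\alpha=0$, $U^k\ne0$ perturb $c_k(X)$ by only $o(1)$; (ii) show that the orbit sets realizing, or nearly realizing, $c_k(X)$ equidistribute with respect to $\lambda_0\wedge d\lambda_0$, so that the Conley–Zehnder averages actually converge to $\op{Ru}(X)$ rather than merely remaining bounded, as in the cruder estimates behind Theorem \ref{thm:exponent}; and (iii) control the relative self-intersection term $Q_\tau(\alpha)$, which is built from the writhes of the braids formed by the $\gamma_i$ and is not determined by local data. I expect (ii) and (iii) to be the true difficulties, and at present they are out of reach in general, which is why \eqref{eqn:conj} must be stated as a conjecture. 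In the toric case, however, the ECH of $\partial X$ admits an explicit combinatorial model in which admissible orbit sets correspond to convex integral paths, $I$ and $\A$ become a lattice-point count and a weighted perimeter, equidistribution is automatic, and $N_X(L)$ reduces to an Ehrhart-type expansion — this is the route by which \eqref{eqn:conj} is proved for the toric domains treated in this paper.
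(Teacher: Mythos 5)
First, note that the statement you are addressing is a conjecture: the paper does not prove it, but only offers a heuristic (in \S\ref{sec:heuristics}) together with proofs in special cases (Theorem~\ref{thm:main} for strictly convex/concave toric domains, via the weight expansion, ball-packing and lattice-path formulas for ECH capacities, combined with Proposition~\ref{prop:toricRuelle}). So there is no complete argument for you to match, and your closing paragraph correctly identifies where any proof attempt gets stuck. Your last two paragraphs (Conley--Zehnder rotation averaging to $\op{Ru}$ under equidistribution of the capacity-realizing orbit sets) are in fact close in spirit to the paper's heuristic, which fixes the min-max generator $\alpha$ with $I(\alpha)=2k$ and $\mc{A}(\alpha)=c_k$, rewrites the ECH index via asymptotic linking numbers and rotation numbers as in \eqref{eqn:Irewrite}--\eqref{eqn:suggestive}, and then uses the Arnold--Vogel theorem \eqref{eqn:Arnold} for the quadratic (volume) part and the definition \eqref{eqn:Rurot} of $\op{Ru}$ for the linear part, assuming an Irie-type equidistribution of the min-max generators.

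However, your central reduction is flawed even as a heuristic. You replace the grading constraint by an action-rank count, claiming $c_k(X)\approx N_X^{-1}(k)$ with $N_X(L)=\#\{\alpha \text{ admissible}: \mc{A}(\alpha)<L\}$, and then assert the ``key claim'' $N_X(L)=\frac{L^2+\op{Ru}(X)L}{2V}+o(L)$. For a general generic nice star-shaped domain this is false already at leading order: the degree-$2k$ min-max generator is selected by the index condition $I(\alpha)=2k$, not by being $k$-th in the action ordering, and the number of admissible orbit sets of action $<L$ need not grow like $L^2/(2V)$ at all (for Reeb flows with positive topological entropy it grows exponentially; in general there is no reason for the coefficient $1/(2V)$). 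The quadratic-in-action law with coefficient $1/(2V)$ is a statement about the ECH \emph{index} of the capacity-realizing generators, which is exactly what the paper's heuristic extracts from \eqref{eqn:I} via the asymptotic linking number, rather than a statement about counting generators by action. Your ellipsoid sanity check works only because an irrational ellipsoid has exactly two simple orbits and there the index ordering happens to coincide with the action ordering; this is atypical. So to salvage your route you would have to replace $N_X(L)$ by an index-filtered count, at which point you are forced back to expanding $I(\alpha)$ as in \eqref{eqn:Irewrite}, i.e.\ to the paper's heuristic. (A smaller inaccuracy: the toric case in the paper is proved through capacity formulas --- the weight expansion \eqref{eqn:ckw} plus the lattice-path formulas of Theorem~\ref{thm:ckconvex} and its concave analogue, together with \eqref{eqn:fo1} --- not by running the chain-level equidistribution argument you describe.)
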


\begin{example}
The ball $B(a)$ from Example~\ref{ex:ball} does not satisfy the above conjecture (hence the word ``generic'' in the conjecture), since $e_k(B(a))$ does not coverge. However we will see below that $\op{Ru}(B(a))=2a$, so it is still true that $(-1/2)\op{Ru}(B(a))$ is between the lim inf and lim sup of $e_k(B(a))$. One might conjecture that for any nice star-shaped domain, not necessarily generic, $e_k$ is $O(1)$ and the Ruelle invariant is between the lim inf and the lim sup.
\end{example}

\subsection{Results for toric domains}

Given a domain $\Omega$ in the nonnegative quadrant of $\R^2$, we define an associated {\em toric domain\/}
\[
X_\Omega = \left\{z\in\C^2\;\big|\;\pi(|z_1|^2,|z_2|^2)\in\Omega\right\}.
\]
The factor of $\pi$ ensures among other things that
\begin{equation}
\label{eqn:volarea}
\op{vol}(X_\Omega) = \op{area}(\Omega).
\end{equation}

\begin{definition}
A {\em nice toric domain\/} is a toric domain $X_\Omega$ which is also a nice star-shaped domain, meaning that $\partial X_\Omega$ is a smooth hypersurface transverse to the radial vector field. This implies that $\partial\Omega$ consists of the line segment from $(0,0)$ to $(a,0)$ for some $a>0$, the line segment from $(0,0)$ to $(0,b)$ for some $b>0$, and a smooth curve from $(0,b)$ to $(a,0)$ which is transverse to the radial vector field on $\R^2$. We denote the numbers $a$ and $b$ by $a(\Omega)$ and $b(\Omega)$, and the smooth curve from $(0,b)$ to $(a,0)$ by $\partial_+\Omega$.
\end{definition}

\begin{example}
Suppose $\Omega$ is the triangle with vertices $(0,0)$, $(a,0)$, and $(0,b)$. Then $X_\Omega$ is the ellipsoid
\[
E(a,b) = \left\{z\in\C^2\;\bigg|\; \frac{\pi|z_1|^2}{a} + \frac{\pi|z_2|^2}{b}\le 1\right\}.
\]
This is a nice toric domain.
\end{example}

\begin{definition}
\label{def:sctd}
A {\em strictly convex toric domain\/} is a nice toric domain $X_\Omega$ in which $\partial_+\Omega$ is the graph of a function $f:[0,a]\to[0,b]$ with $f(0)=b$, $f'(0) < 0$, $f''<0$ everywhere, and $f(a)=0$.

A {\em strictly concave toric domain\/} is a nice toric domain $X_\Omega$ in which $\partial_+\Omega$ is the graph of a function $f:[0,a]\to[0,b]$ with $f(0)=b$, $f''>0$ everywhere, and $f(a)=0$.
\end{definition}

We can now state one of the main results of this paper:

\begin{theorem}
\label{thm:main}
(proved in \S\ref{sec:main})
Equation~\eqref{eqn:conj} holds whenever $X$ is a strictly convex or strictly concave toric domain\footnote{It is shown in \cite{looijenga} that Theorem~\ref{thm:main} generalizes to (not necessarily strictly) convex and concave toric domains such that $\partial_+\Omega$ has no edges of rational slope.}.
\end{theorem}

To clarify what this theorem says, we have:

\begin{proposition}
\label{prop:toricRuelle}
(proved in \S\ref{sec:Rutoric})
Let $X_\Omega$ be a nice toric domain such that $\partial_+\Omega$ has negative slope\footnote{For nice toric domains in $\R^4$, the condition that $\partial_+\Omega$ has negative slope is equivalent to dynamical convexity by \cite[Prop.\ 1.8]{examples}. In fact, the negative slope hypothesis can be removed from Proposition~\ref{prop:toricRuelle} by a more careful argument \cite{gz}.} everywhere. Then
\[
\op{Ru}(X_\Omega) = a(\Omega) + b(\Omega).
\]
\end{proposition}

\begin{remark}
Equation~\eqref{eqn:conj} also holds for ellipsoids $E(a,b)$ with $a/b$ irrational, by \cite[Lem.\ 2.2]{cgls}.
\end{remark}

It is quite possible that equation \eqref{eqn:conj} is special to toric domains and that Conjecture~\ref{conj:main} is false more generally. Nonetheless, the toric case already gives an application to symplectic embedding problems:

\begin{corollary}
\label{cor:folding}
Let $X_\Omega$ and $X_{\Omega'}$ be nice toric domains satisying \eqref{eqn:conj}, e.g.\ strictly convex or strictly concave toric domains, or irrational ellipsoids. Suppose that $\op{vol}(X_\Omega)=\op{vol}(X_{\Omega'})$ and that there exists a symplectic embedding $\op{int}(X_\Omega)\to X_{\Omega'}$. Then
\[
a(\Omega) + b(\Omega) \ge a(\Omega') + b(\Omega').
\]
\end{corollary}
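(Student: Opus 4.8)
The plan is to run the symplectic embedding hypothesis through the monotonicity of ECH capacities, convert it into an inequality between the error terms $e_k$, pass to the limit using the assumed formula \eqref{eqn:conj}, and finally turn the resulting inequality between Ruelle invariants into the stated inequality using Proposition~\ref{prop:toricRuelle}.

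The first and only nontrivial step is to promote the given symplectic embedding $\op{int}(X_\Omega)\to X_{\Omega'}$ to the inequality $c_k(X_\Omega)\le c_k(X_{\Omega'})$ for all $k$. For $0<r<1$, the linear map scaling coordinates in $\C^2$ by $\sqrt r$ carries $(X_\Omega,r\omega)$ to $(X_{r\Omega},\omega)$ symplectically, so by conformality \eqref{eqn:conformality} we have $c_k(X_{r\Omega})=r\,c_k(X_\Omega)$. Moreover $X_{r\Omega}\subset\op{int}(X_\Omega)$: the segments of $\partial\Omega$ along the coordinate axes pull back under $z\mapsto\pi(|z_1|^2,|z_2|^2)$ to \emph{interior} points of $X_\Omega$, so the only part of $\partial X_\Omega$ that matters lies over $\partial_+\Omega$, and since $\partial_+\Omega$ is transverse to the radial vector field, each ray from the origin meets it exactly once, whence $r\Omega$ is disjoint from $\partial_+\Omega$ for $r<1$. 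Composing $X_{r\Omega}\hookrightarrow\op{int}(X_\Omega)\to X_{\Omega'}$ and applying monotonicity \eqref{eqn:monotone} gives $r\,c_k(X_\Omega)\le c_k(X_{\Omega'})$; letting $r\to 1$ proves the claim.

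Now, since $\op{vol}(X_\Omega)=\op{vol}(X_{\Omega'})$, subtracting $2\sqrt{k\op{vol}(X_\Omega)}$ from both sides of the capacity inequality and invoking the definition \eqref{eqn:errorterm} of the error term gives $e_k(X_\Omega)\le e_k(X_{\Omega'})$ for all $k$. Both domains are assumed to satisfy \eqref{eqn:conj}, so letting $k\to\infty$ yields $-\tfrac12\op{Ru}(X_\Omega)\le-\tfrac12\op{Ru}(X_{\Omega'})$, that is, $\op{Ru}(X_\Omega)\ge\op{Ru}(X_{\Omega'})$.

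It remains to identify the Ruelle invariants. Each family named in the statement is a nice toric domain whose curve $\partial_+\Omega$ has negative slope everywhere: for a strictly convex domain this is immediate from $f'(0)<0$ together with $f''<0$; for a strictly concave domain it follows from $f''>0$ and $f(a)=0$ together with transversality of $\partial_+\Omega$ to the radial field at $(a,0)$, which forbids $f'(a)=0$; and for an ellipsoid $E(a,b)$ the set $\partial_+\Omega$ is a line segment of slope $-b/a$. Hence Proposition~\ref{prop:toricRuelle} applies and gives $\op{Ru}(X_\Omega)=a(\Omega)+b(\Omega)$ and $\op{Ru}(X_{\Omega'})=a(\Omega')+b(\Omega')$, and combining with the previous paragraph yields $a(\Omega)+b(\Omega)\ge a(\Omega')+b(\Omega')$. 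The main obstacle in all of this is the capacity step above — checking that passing to the interior of a nice toric domain does not decrease its ECH capacities, so that the ``$\op{int}$'' in the hypothesis is no weaker than a closed embedding; after that the argument is a formal consequence of \eqref{eqn:conj} and Proposition~\ref{prop:toricRuelle}. (For an arbitrary nice toric domain satisfying \eqref{eqn:conj}, rather than only the listed examples, one additionally needs $\partial_+\Omega$ to have negative slope so that Proposition~\ref{prop:toricRuelle} applies, which holds in every case named.)
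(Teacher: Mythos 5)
Your proof is correct and follows essentially the same route as the paper: monotonicity of ECH capacities gives $e_k(X_\Omega)\le e_k(X_{\Omega'})$, then \eqref{eqn:conj} and Proposition~\ref{prop:toricRuelle} finish. The only difference is that you justify the passage to $\op{int}(X_\Omega)$ by a scaling/conformality argument, whereas the paper simply cites the fact that the interior of $X_\Omega$ has the same ECH capacities as $X_\Omega$.
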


\begin{proof}
The interior of $X_\Omega$ has the same ECH capacities as $X_\Omega$; see \cite[\S4.2]{qech}. Thus, by the monotonicity of the ECH capacities \eqref{eqn:monotone}, the definition of the error term \eqref{eqn:errorterm}, and the hypothesis that $\op{vol}(X_\Omega)=\op{vol}(X_{\Omega'})$, we have
\[
e_k(X_\Omega) \le e_k(X_{\Omega'})
\]
for all $k$. Since $X_\Omega$ and $X_{\Omega'}$ satisfy \eqref{eqn:conj}, it follows from Proposition~\ref{prop:toricRuelle} that
\[
\frac{-(a(\Omega) + b(\Omega))}{2} \le \frac{-(a(\Omega') + b(\Omega'))}{2}.
\]
\end{proof}

\begin{remark}
Corollary~\ref{cor:folding} is not vacuous; there are examples of symplectic embeddings of an open toric domain into another (nonsymplectomorphic) toric domain of the same volume, including many cases when the domains are ellipsoids. For example, it is shown in \cite{mcds} that if $a\ge (17/6)^2$, then the interior of the ellipsoid $E(1,a)$ symplectically embeds into a ball\footnote{Although Corollary~\ref{cor:folding} is not applicable here because the ball does not satisfy \eqref{eqn:conj}, the conclusion of Corollary~\ref{cor:folding} is still true in this example since $1+a\ge 2\sqrt{a}$.} of the same volume, namely $E(\sqrt{a},\sqrt{a})$. 
\end{remark}

\begin{remark}
The examples of nice star-shaped domains $X$ discussed here seem to have $e_k(X)$ negative for all $k>0$. However there also exist examples of nice star-shaped domains $X\subset \R^4$ with $e_1(X)$ positive. The reason is that if $X$ is a nice star-shaped domain, then by the definition of ECH capacities, $c_1(X)\ge \mc{A}_{\op{min}}(X)$, where $\mc{A}_{\op{min}}(X)$ denotes the minimum symplectic action (period) of a Reeb orbit on $\partial X$. Now define the {\em systolic ratio\/}
\[
\op{sys}(X) = \frac{\mc{A}_{\op{min}}(X)^2}{2\op{vol}(X)}.
\]
It then follows from \eqref{eqn:errorterm} that
\[
e_1(X) \le 0 \Longrightarrow \op{sys}(X) \le 2.
\]
However it is shown in \cite{abhs} that there exist nice star-shaped domains with systolic ratio greater than $2$ (in fact arbitrarily large), so these must have $e_1$ positive.

On the other hand, in the {\em dynamically convex\/} case, the best known examples \cite{abhs2} have systolic ratio $2-\epsilon$. A reasonable conjecture would be that if $X$ is dynamically convex then $e_k(X) < 0$ for all $k>0$.
\end{remark}

\subsection{Outline of the rest of the paper}

In \S\ref{sec:Rutoric} we prove Proposition~\ref{prop:toricRuelle}, computing the Ruelle invariant of some toric domains, by direct calculation.

In \S\ref{sec:main} we prove the main result, Theorem~\ref{thm:main}. To do so, we use two formulas for the ECH capacities of concave toric domains proved in \cite{concave}: one in terms of the ``weight expansion'', and one in terms of lattice paths. We also use two similar formulas for the ECH capacities of convex toric domains from \cite{cg}. By carefully estimating using all four of these formulas and combining the results with Proposition~\ref{prop:toricRuelle}, we obtain the theorem.

In \S\ref{sec:exponent} we prove Theorem~\ref{thm:exponent}. The idea is to estimate the ECH capacities of a region by packing it with cubes in a naive way. The estimates we get in this case are not as good as in the case of toric domains, because concave toric domains can be packed ``more efficiently'' with balls coming from the weight expansion.

In \S\ref{sec:heuristics} we give a heuristic discussion of why we expect Conjecture~\ref{conj:main} to be true, by comparing the definition of the ECH index to Arnold's asymptotic linking number and relating this to a conjecture by Irie on equidistribution properties of ECH capacities. While this is far from a proof, we do see the volume and Ruelle invariant emerge naturally.

\paragraph{Acknowledgments.} Thanks to Alberto Abbondandolo, Julian Chaidez, and Umberto Hryniewicz for explaining the Ruelle invariant; to Dusa McDuff for explaining Lemma~\ref{lem:mcduff}; and to Dan Cristofaro-Gardiner for discussions about the asymptotics of ECH capacities. Partially supported by NSF grant DMS-1708899 and a Humboldt Research Award.

\section{The Ruelle invariant of toric domains}
\label{sec:Rutoric}

We now prove Proposition~\ref{prop:toricRuelle}, computing the Ruelle invariant of a nice toric domain $X_\Omega$ such that $\partial_+\Omega$ has everywhere negative slope.

To start, we denote the Euclidean coordinates on the plane in which $\Omega$ lives by $\mu_1$ and $\mu_2$. Define two functions
\[
\alpha,\beta:\partial_+\Omega\longrightarrow \R
\]
as follows: Given $(\mu_1,\mu_2)\in\partial_+\Omega$, the tangent line to $\partial_+\Omega$ through $(\mu_1,\mu_2)$ intersects the axes at the points $(\alpha(\mu_1,\mu_2),0)$ and $(0,\beta(\mu_1,\mu_2))$.

Proposition~\ref{prop:toricRuelle} now follows from the two lemmas below:

\begin{lemma}
\label{lem:Rutoric1}
If $X_\Omega$ is a nice toric tomain such that $\partial_+\Omega$ has everywhere negative slope, then
\begin{equation}
\label{eqn:Rutoric1}
\op{Ru}(X_\Omega) = \int_{\partial_+\Omega}\frac{\alpha+\beta}{\alpha\beta}(\mu_1\,d\mu_2 - \mu_2\,d\mu_1)
\end{equation}
where $\partial_+\Omega$ is oriented as a curve from $(a(\Omega),0)$ to $(0,b(\Omega))$.
\end{lemma}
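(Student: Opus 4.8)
The plan is to compute $\op{Ru}(X_\Omega)=\int_{\partial X_\Omega}\rho\,\lambda_0\wedge d\lambda_0$ directly, exploiting the torus symmetry of $X_\Omega$. First I would set up coordinates: parametrize the hypersurface $\partial X_\Omega$ using the torus action, writing $z_j=\sqrt{\mu_j/\pi}\,e^{i\theta_j}$ so that a point of $\partial X_\Omega$ is determined by a point $(\mu_1,\mu_2)\in\partial_+\Omega$ together with angles $(\theta_1,\theta_2)\in(\R/2\pi\Z)^2$. In these coordinates I would compute the restriction of $\lambda_0$ to $\partial X_\Omega$; a direct calculation gives $\lambda_0|_{\partial X_\Omega}=\tfrac{1}{2\pi}(\mu_1\,d\theta_1+\mu_2\,d\theta_2)$ up to the usual normalization, and $d\lambda_0$ and the volume form $\lambda_0\wedge d\lambda_0$ follow. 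The key point is that the Reeb vector field $R$ on $\partial X_\Omega$ is tangent to the torus orbits: since $\partial_+\Omega$ has everywhere negative slope, we may write the outward conormal direction to $\partial_+\Omega$ at $(\mu_1,\mu_2)$ proportionally to $(1/\alpha,1/\beta)$ — this is exactly the geometric meaning of the intercepts $\alpha,\beta$ — and then $R$ is a constant (on each orbit) linear combination $c_1\partial_{\theta_1}+c_2\partial_{\theta_2}$ with $(c_1,c_2)$ proportional to $(1/\alpha,1/\beta)$, normalized by $\lambda_0(R)=1$.

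Next I would compute the rotation function $\rho$. Because the Reeb flow is linear in the angular coordinates $(\theta_1,\theta_2)$ and the contact planes $\xi$ decompose compatibly with the torus symmetry, the linearized return map along a Reeb trajectory is, up to the choice of trivialization, a rotation whose rate is governed by how the direction $(1/\alpha,1/\beta)$ varies as one moves along $\partial_+\Omega$ — or rather, since $R$ is tangent to a fixed orbit, $\rho$ at a point over $(\mu_1,\mu_2)$ is a sum of two ``rotation rates'' in the two angular directions, each equal to the corresponding component $c_j$. Concretely I expect $\rho=c_1+c_2$ up to sign and factor, i.e.\ proportional to $(1/\alpha+1/\beta)$ divided by the normalizing factor. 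Then $\op{Ru}(X_\Omega)$ becomes an integral over $\partial_+\Omega\times(\R/2\pi\Z)^2$ of $\rho$ against $\lambda_0\wedge d\lambda_0$; doing the $\theta$-integration (which contributes a factor like $(2\pi)^2$ that cancels against normalizations) reduces everything to a line integral over $\partial_+\Omega$, which I would massage into the claimed form $\int_{\partial_+\Omega}\frac{\alpha+\beta}{\alpha\beta}(\mu_1\,d\mu_2-\mu_2\,d\mu_1)$, tracking the orientation convention carefully so the curve is traversed from $(a(\Omega),0)$ to $(0,b(\Omega))$.

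The main obstacle I anticipate is the bookkeeping around the trivialization $\tau$ of $\xi$ and the resulting rotation number: the Ruelle invariant is trivialization-independent, but to extract the formula for $\rho$ one must pick a trivialization, and the natural global one coming from the coordinates $(\mu_1,\mu_2,\theta_1,\theta_2)$ needs to be checked to extend over $\partial X_\Omega\cong S^3$ (where the torus action degenerates along the two circles $\mu_1=0$ and $\mu_2=0$) — although, since $\rho$ need only be defined almost everywhere and is integrable, the degenerate locus has measure zero and can be ignored for the integral. The other delicate point is getting all the factors of $2\pi$, the $1/2$ in $\lambda_0$, and the signs consistent; I'd pin these down by sanity-checking against the ellipsoid $E(a,b)$, where $\alpha\equiv a$, $\beta\equiv b$ are constant on $\partial_+\Omega$ and the line integral should reproduce a known value (ultimately $a+b$ after combining with Lemma~\ref{lem:Rutoric1}'s companion lemma). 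Once the almost-everywhere formula $\rho\,\lambda_0\wedge d\lambda_0 = (\text{const})\cdot\frac{\alpha+\beta}{\alpha\beta}(\mu_1\,d\mu_2-\mu_2\,d\mu_1)\wedge(\text{angular form})$ is established, integrating out the angles finishes the proof.
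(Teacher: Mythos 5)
Your overall strategy is the same as the paper's: pass to action--angle coordinates $(\mu_1,\mu_2,\theta_1,\theta_2)$ on the locus where $z_1,z_2\neq 0$, write $\lambda_0=\frac{1}{2\pi}(\mu_1\,d\theta_1+\mu_2\,d\theta_2)$, identify the Reeb field as a torus-invariant combination $c_1\partial_{\theta_1}+c_2\partial_{\theta_2}$ with $(c_1,c_2)$ proportional to $(\beta,\alpha)$ and normalized via $\beta\mu_1+\alpha\mu_2=\alpha\beta$, compute $\rho$ pointwise, and integrate out the angles. Your guessed value $\rho=\frac{1}{2\pi}(c_1+c_2)=\frac{\alpha+\beta}{\alpha\beta}$ is the correct one.

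The gap is in how you justify that value of $\rho$. You observe that the coordinate trivialization of $\xi$ degenerates along the two circles $\{z_1=0\}$ and $\{z_2=0\}$, and then assert that since this locus has measure zero it ``can be ignored for the integral.'' That is precisely the wrong conclusion. Relative to the torus-symmetric trivialization $\tau'$ spanned by $V=\mu_2\partial_{\theta_1}-\mu_1\partial_{\theta_2}$ and a multiple of $\alpha\partial_{\mu_1}-\beta\partial_{\mu_2}$, the Reeb flow does not rotate at all: $R$ commutes with $V$, so $d\phi_tV=V$ and $\op{rot}_{\tau'}(y,T)=0$ for every $y$ and $T$. If the failure of $\tau'$ to extend really could be ignored, you would conclude $\rho\equiv 0$ and $\op{Ru}(X_\Omega)=0$. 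Trivialization-independence of $\rho$ holds only among trivializations defined over all of $Y\cong S^3$, and $\tau'$ is not one: relative to any global trivialization $\tau$, the frame vector $V$ winds once around each time $\theta_1$ or $\theta_2$ makes a full revolution. It is exactly this relative winding---a correction that is nonzero at every point where the coordinates are defined, not merely on a null set---that yields $\rho=\frac{1}{2\pi}R(\theta_1+\theta_2)=\frac{\alpha+\beta}{\alpha\beta}$. So your formula for $\rho$ is right, but the argument you give for it would literally produce zero; you need the winding comparison between $\tau'$ and a global trivialization, not a measure-zero dismissal.
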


\begin{lemma}
\label{lem:Rutoric2}
If $\gamma$ is a differentiable plane curve from $(a,0)$ to $(0,b)$ with everywhere negative slope, where $a,b>0$, and if $\alpha$ and $\beta$ are defined as above, then
\[
\int_\gamma\frac{\alpha+\beta}{\alpha\beta}(\mu_1\,d\mu_2 - \mu_2\,d\mu_1) = a+b.
\]
\end{lemma}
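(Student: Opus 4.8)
The plan is to use the defining property of $\alpha$ and $\beta$ to recognize the integrand as an exact $1$-form on $\gamma$, so that the integral is computed simply by evaluating at the two endpoints.

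First I would fix a parametrization $t\mapsto(\mu_1(t),\mu_2(t))$ of $\gamma$. Since the slope is everywhere negative and finite, the tangent vector $(\dot\mu_1,\dot\mu_2)$ has both components nonvanishing; hence $\mu_1$ is a global coordinate on $\gamma$, the curve is the graph of a decreasing differentiable function over $\mu_1\in[0,a]$, and consequently $\gamma$ lies in the closed first quadrant, meeting the axes only at its endpoints. The tangent line through $(\mu_1,\mu_2)$ genuinely meets each coordinate axis, and reading off the intercepts from the definitions of $\alpha$ and $\beta$ gives, as identities of $1$-forms along $\gamma$,
\[
\alpha\,d\mu_2 = \mu_1\,d\mu_2 - \mu_2\,d\mu_1, \qquad \beta\,d\mu_1 = \mu_2\,d\mu_1 - \mu_1\,d\mu_2 .
\]
(These are just restatements of ``$\alpha$ is the $\mu_1$-intercept'' and ``$\beta$ is the $\mu_2$-intercept''.) Along the way I would check that $\alpha,\beta>0$ on $\gamma$, so that the integrand in the statement is well defined: the formulas $\alpha = \mu_1 - \mu_2\,\dot\mu_1/\dot\mu_2 \ge \mu_1$ and $\beta = \mu_2 - \mu_1\,\dot\mu_2/\dot\mu_1 \ge \mu_2$ (the inequalities because the slope is negative and $\mu_1,\mu_2\ge 0$), together with a direct check at the endpoints $(a,0)$ and $(0,b)$, do this.

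The computation is then immediate. Writing $\tfrac{\alpha+\beta}{\alpha\beta} = \tfrac1\alpha + \tfrac1\beta$ and substituting the two identities above,
\[
\frac{\alpha+\beta}{\alpha\beta}\left(\mu_1\,d\mu_2 - \mu_2\,d\mu_1\right) = \frac{1}{\alpha}\left(\alpha\,d\mu_2\right) + \frac{1}{\beta}\left(-\beta\,d\mu_1\right) = d\mu_2 - d\mu_1 = d(\mu_2 - \mu_1).
\]
Integrating over $\gamma$ oriented from $(a,0)$ to $(0,b)$ (the same orientation as in Lemma~\ref{lem:Rutoric1}) gives $(\mu_2-\mu_1)\big|_{(0,b)} - (\mu_2-\mu_1)\big|_{(a,0)} = b - (-a) = a+b$, as claimed.

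There is no real obstacle here; the only points that require a moment of care are the orientation bookkeeping and the routine verification that $\alpha$ and $\beta$ never vanish. The single genuine idea is the observation in the second step — that the definition of the intercepts $\alpha$ and $\beta$ collapses the given $1$-form to $d(\mu_2-\mu_1)$ — and once that is noticed, both the lemma and the particular shape $a+b$ of the answer are transparent.
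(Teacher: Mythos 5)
Your proposal is correct and is essentially the paper's own proof: the identities $\alpha\,d\mu_2=\mu_1\,d\mu_2-\mu_2\,d\mu_1$ and $\beta\,d\mu_1=\mu_2\,d\mu_1-\mu_1\,d\mu_2$ are exactly the paper's formulas $\alpha=\Delta/\mu_2'$, $\beta=-\Delta/\mu_1'$ with $\Delta=\mu_1\mu_2'-\mu_1'\mu_2$, and reducing the integrand to $d(\mu_2-\mu_1)$ and applying the fundamental theorem of calculus is precisely what the paper does. The extra checks on positivity of $\alpha,\beta$ and the orientation bookkeeping are fine.
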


\begin{proof}
Write $Y=\partial X_\Omega\subset\C^2$, and let $Y_0$ denote the set of $z\in Y$ such that $z_1,z_2\neq 0$. For $z=(z_1,z_2)\in Y_0$, write $\mu_i=\pi|z_i|^2$, and let $\theta_i$ denote the argument of $z_i$. In these coordinates, the standard Liouville form \eqref{eqn:Liouville} is given by
\begin{equation}
\label{eqn:lambdamu}
\lambda_0 = \frac{1}{2\pi}\left(\mu_1\,d\theta_1 + \mu_2\,d\theta_2\right).
\end{equation}
We have
\[
T_z Y = \op{span}\left(\partial_{\theta_1}, \partial_{\theta_2}, \alpha\partial_{\mu_1} - \beta\partial_{\mu_2}\right).
\]
Thus the contact plane $\xi_z$ is spanned by the vectors
\[
\begin{split}
V &= \mu_2\partial_{\theta_1} - \mu_1\partial_{\theta_2},\\
W &= \alpha\partial_{\mu_1} - \beta\partial_{\mu_2}.
\end{split}
\]
The Reeb vector field is then given by
\begin{equation}
\label{eqn:Reeb}
R = \frac{2\pi\left(\beta\partial_{\theta_1} + \alpha\partial_{\theta_2}\right)}{\alpha\beta}.
\end{equation}
Note here that $\lambda_0(R)=1$ because
\begin{equation}
\label{eqn:defab}
\beta\mu_1 + \alpha\mu_2 = \alpha\beta
\end{equation}
by the definition of $\alpha$ and $\beta$. Equation \eqref{eqn:defab} also implies that we have a symplectic trivialization $\tau'$ of $\xi|_{Y_0}$ given by
\[
(\tau')^{-1} = \left( V, \frac{-2\pi W}{\alpha\beta}\right).
\]

Since $R$ preserves $\mu_1$ and $\mu_2$, we have $[R,V]=1$, so in the notation \eqref{eqn:dphit} we have $d\phi_tV = V$. This implies that
\[
\op{rot}_{\tau'}(y,T)=0
\]
for all $y\in Y_0$ and $T\ge 0$. However we cannot use the trivialization $\tau'$ to compute the Ruelle invariant because this trivialization does not extend over $Y\setminus Y_0$. In particular, if $\tau$ is a trivialization of $\xi$ over all of $Y$, then as one moves around a circle in $Y_0$ in which either $\theta_1$ or $\theta_2$ rotates once around $S^1$, the vector $V$ rotates once around $S^1$ with respect to $\tau$. It follows that on $Y_0$ we have
\[
\rho = \frac{1}{2\pi}R(\theta_1+\theta_2).
\]
By equation \eqref{eqn:Reeb}, we conclude that
\begin{equation}
\label{eqn:conrot}
\rho = \frac{\alpha+\beta}{\alpha\beta}.
\end{equation}

Now by equation \eqref{eqn:lambdamu}, we have
\[
\lambda_0\wedge d\lambda_0 = \frac{1}{4\pi^2}(\mu_1\,d\mu_2 - \mu_2\,d\mu_1)\,d\theta_1\,d\theta_2
\]
on $Y_0$. So by equations \eqref{eqn:Rurot} and \eqref{eqn:conrot} we have
\[
\op{Ru}(X_\Omega) = \frac{1}{4\pi^2}\int_{Y_0} \frac{\alpha+\beta}{\alpha\beta} (\mu_1\,d\mu_2 - \mu_2\,d\mu_1)\,d\theta_1\,d\theta_2.
\] 
Integrating out $\theta_1$ and $\theta_2$ then gives \eqref{eqn:Rutoric1}.
\end{proof}

\begin{proof}[Proof of Lemma~\ref{lem:Rutoric2}.]
Choose an oriented parametrization of the curve $\gamma$ as $(\mu_1(t),\mu_2(t))$ for $t\in[t_0,t_1]$. Then
\begin{equation}
\label{eqn:math53}
\int_\gamma \frac{\alpha+\beta}{\alpha\beta}(\mu_1\,d\mu_2 - \mu_2\,d\mu_1) = \int_{t_0}^{t_1}\frac{\alpha+\beta}{\alpha\beta}\Delta dt
\end{equation}
where we use the notation
\[
\Delta = \mu_1\mu_2' - \mu_1'\mu_2.
\]
By the definition of $\alpha$ and $\beta$, we have
\[
\begin{split}
\alpha &= \Delta/\mu_2',\\
\beta &= -\Delta/\mu_1'.
\end{split}
\]
The integrand in \eqref{eqn:math53} is then
\[
\frac{\alpha+\beta}{\alpha\beta}\Delta = -\mu_1' + \mu_2'.
\]
The lemma now follows from the fundamental theorem of calculus.
\end{proof}

\section{Bounds on ECH capacities of toric domains}
\label{sec:main}

\subsection{The Ruelle invariant and the weight expansion}
\label{sec:weight}

To relate the Ruelle invariant to ECH capacities, we need to recall the definition of the ``weight expansion'' of a concave toric domain following \cite{concave}.

\begin{definition}
\label{def:concavetd}
A {\em concave toric domain\/} is a toric domain $X_\Omega$ such that
\[
\Omega=\{(\mu_1,\mu_2)\mid 0\le \mu_1\le a, \; 0\le \mu_2 \le f(\mu_1)\}
\]
where $f:[0,a]\to[0,b]$ is a convex function\footnote{This is more general than a strictly concave toric domain as in Definition~\ref{def:sctd}. For a strictly concave toric domain, the function $f$ must furthermore be smooth and strictly convex, and must satisfy additional conditions near $0$ and $a$ to ensure that $\partial X_\Omega$ is smooth.} for some $a,b>0$ with $f(0)=b$ and $f(a)=0$. Write $a(\Omega)=a$ and $b(\Omega)=b$, and denote the graph of $f$ by $\partial_+\Omega$.
\end{definition}

For $c>0$, let $\Delta(c)$ denote the triangle in the plane with vertices $(0,0)$, $(c,0)$, and $(0,c)$. Also, define an {\em integral affine transformation\/} to be a map $\R^2\to\R^2$ given by the composition of an element of $\op{SL}_2\Z$ with a translation. We say that two sets in $\R^2$ are {\em integral affine equivalent\/} if one is the image of the other under an integral affine transformation.

\begin{definition}
\label{def:te}
If $X_\Omega$ is a concave toric domain, we inductively define a canonical countable set $\mc{T}(\Omega)$ of triangles in $\R^2$ such that:
\begin{description}
\item{(i)} Each triangle in $\mc{T}(\Omega)$ is affine equivalent to $\Delta(c)$ for some $c$.
\item{(ii)} Two different triangles in $\mc{T}(\Omega)$ intersect only along their boundaries.
\item{(iii)} $\overline{\bigcup_{T\in\mc{T}(\Omega)}T} = \Omega$.
\end{description}

To start defining $\mc{T}(\Omega)$, let $c$ be the largest real number such that the triangle $\Delta(c)\subset \Omega$.

Now $\partial_+\Delta(c)$ coincides with $\partial_+\Omega$ on the line segment from $(t',c-t')$ to $(t'',c-t'')$ for some $t'\le t''$. If $t'>0$, let $\Omega'$ denote the closure of the component of $\Omega\setminus\Delta(c)$ with $\mu_1\le t'$; otherwise let $\Omega'=\emptyset$. If $t''<c$, let $\Omega''$ denote the closure of the component of $\Omega\setminus \Delta(c)$ with $\mu_1\ge t''$; otherwise let $\Omega''=\emptyset$.

Let $\phi':\R^2\to\R^2$ denote the integral affine transformation defined by
\[
\phi'(\mu_1,\mu_2) = (\mu_1,\mu_1+\mu_2-c).
\]
If $\Omega'$ is nonempty, then $X_{\phi'(\Omega')}$ is a concave toric domain. Likewise, let $\phi''$ denote the integral affine transformation defined by
\[
\phi''(\mu_1,\mu_2) = (\mu_1+\mu_2-c,\mu_2).
\]
If $\Omega''$ is nonempty then $X_{\phi''(\Omega'')}$ is a concave toric domain.

We now inductively define
\[
\mc{T}(\Omega) = \{\Delta(c)\}\cup \bigsqcup_{T\in\mc{T}(\phi'(\Omega'))}(\phi')^{-1}(T) \cup \bigsqcup_{T\in\mc{T}(\phi''(\Omega''))}(\phi'')^{-1}(T).
\]
Here we interpret the terms involving $\Omega'$ or $\Omega''$ to be the empty set when $\Omega'$ or $\Omega''$ are empty.
\end{definition}

Properties (i) and (ii) above are immediate from the construction. It also follows from the construction that each triangle in $\mc{T}(\Omega)$ is a subset of $\Omega$. One can prove the rest of property (iii) by elementary arguments with a bit more work; or as overkill one can use equation \eqref{eqn:ckw} below and the volume property of ECH capacities \eqref{eqn:vp}.

\begin{definition}
If $X_\Omega$ is a concave toric domain, choose an ordering $\mc{T}(\Omega)=\{T_1,T_2,\ldots\}$ where $T_i$ is integral affine equivalent to $\Delta(a_i)$ and $a_i\ge a_{i+1}$ for each $i$. The (possibly finite) sequence $(a_1,a_2,\ldots)$ is the {\em weight expansion\/} of $X_\Omega$, which we denote by $W(\Omega)$.
\end{definition}

The significance of the weight expansion is:

\begin{theorem}
\cite[Thm.\ 1.4 and Rmk.\ 1.6]{concave}
If $X_\Omega$ is a concave toric domain with weight expansion $W(\Omega) = (a_1,\ldots)$, then its ECH capacities are given by
\begin{equation}
\label{eqn:ckw}
c_k(X_\Omega) = c_k\left(\bigsqcup_iB(a_i)\right).
\end{equation}
\end{theorem}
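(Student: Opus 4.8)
The plan is to prove the two inequalities hidden in \eqref{eqn:ckw} by quite different methods, reducing the harder one --- by continuity and induction on the weight expansion --- to a combinatorial identity. The easy direction is $c_k\big(\bigsqcup_i B(a_i)\big) \le c_k(X_\Omega)$. I would deduce it from the monotonicity property \eqref{eqn:monotone}, together with the fact that a nice star-shaped domain has the same ECH capacities as its interior \cite[\S4.2]{qech}, by exhibiting a symplectic embedding of $\bigsqcup_i \op{int} B(a_i)$ into $\op{int}(X_\Omega)$. Such an embedding is built directly from Definition~\ref{def:te}: the toric domain $X_{\Delta(c)}$ is exactly the ball $B(c)$, and it sits in $X_\Omega$; the shears $\phi',\phi''$ have linear part in $\op{SL}_2\Z$, so they lift to symplectomorphisms of the open toric pieces $X_\Omega\cap(\C^*)^2$, carrying the interiors of the sheared concave toric domains $X_{\phi'(\Omega')}$ and $X_{\phi''(\Omega'')}$ onto disjoint open subsets of $\op{int}(X_\Omega)$ lying over $\Omega'$ and $\Omega''$ (a standard limiting argument absorbs the coordinate axes, which have measure zero). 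Iterating packs $\op{int}(X_\Omega)$ with disjoint symplectic copies of all the $\op{int}B(a_i)$.

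For the reverse inequality $c_k(X_\Omega)\le c_k\big(\bigsqcup_i B(a_i)\big)$ an embedding argument is hopeless, since a disjoint union of balls is disconnected; one must instead use the definition of ECH capacities via embedded contact homology. I would first reduce to the case in which $\partial_+\Omega$ is a convex polygonal path with vertices in $\Q^2$: approximating $\Omega$ by a rational polygon $\Omega_1$ with $\Omega_1\subseteq\Omega\subseteq(1+\epsilon)\Omega_1$, monotonicity squeezes both sides of \eqref{eqn:ckw} for $\Omega$ between their values for $\Omega_1$ and for $(1+\epsilon)\Omega_1$, which differ by a factor $1+\epsilon$ by conformality \eqref{eqn:conformality}. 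For such a rational concave toric domain one perturbs the corners of $\partial X_\Omega$ to a nondegenerate (or Morse--Bott) toric contact form on $S^3$ and computes its embedded contact homology: the Reeb orbits are the two axial orbits plus a short orbit near each corner, the ECH generators of ECH index $2k$ correspond to certain convex integer lattice paths inside $\Omega$, and in the limit $c_k(X_\Omega)$ is the maximum over such paths of an explicit ``$\Omega$-length.'' It then remains to prove the combinatorial statement that this maximum equals $c_k\big(\bigsqcup_i B(a_i)\big)$, the latter computed from the disjoint union formula \eqref{eqn:du} and the ball formula \eqref{eqn:ckball}.

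I would prove this combinatorial identity by induction on the depth of the recursion in Definition~\ref{def:te}, in the one-step form $c_k(X_\Omega) = c_k\big(B(c)\sqcup X_{\phi'(\Omega')}\sqcup X_{\phi''(\Omega'')}\big)$. Together with the base case $X_{\Delta(c)}=B(c)$ (equation \eqref{eqn:ckball}) and the recursion $W(\Omega)=\{c\}\cup W(\phi'(\Omega'))\cup W(\phi''(\Omega''))$, iterating expresses $c_k(X_\Omega)$ as $c_k$ of a disjoint union of $B(a_1),\ldots,B(a_N)$ with finitely many small residual concave toric domains; letting the recursion depth tend to infinity, those residual pieces contribute negligibly to $c_k$ by a crude volume bound, and one recovers $c_k\big(\bigsqcup_i B(a_i)\big)$. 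To prove the one-step identity, one shows that an optimal lattice path for $\Omega$ breaks, along the segment where $\partial_+\Delta(c)$ meets $\partial_+\Omega$, into an optimal lattice path for the triangle $\Delta(c)$ --- contributing exactly the ECH capacity of $B(c)$ by \eqref{eqn:ckball} --- together with optimal lattice paths for $\Omega'$ and $\Omega''$, with the associated ECH indices summing to $2k$; since $\phi',\phi''\in\op{SL}_2\Z$ (up to translation) preserve lattice points and areas, these two contributions equal $c_{k'}(X_{\phi'(\Omega')})$ and $c_{k''}(X_{\phi''(\Omega'')})$ for suitable $k'+k''=k$, matching \eqref{eqn:du}.

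The main obstacle is the ECH input and the combinatorial bookkeeping: setting up the lattice-path computation of $c_k$ for rational concave toric domains (carefully tracking the ECH index against the symplectic action, and checking that the capacities are realized by toric orbit sets), and, in the induction, verifying that the splitting of an optimal path is compatible with the ECH grading so that the indices really do sum to $2k$ as the disjoint union formula \eqref{eqn:du} requires. By contrast the ball-packing inequality and the continuity reduction are comparatively routine.
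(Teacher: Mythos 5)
This statement is not proved in the paper at all: it is quoted from \cite[Thm.\ 1.4 and Rmk.\ 1.6]{concave}, so there is no internal argument to compare against. Your sketch is, however, an essentially faithful reconstruction of how the cited reference proceeds: the inequality $c_k\big(\bigsqcup_i B(a_i)\big)\le c_k(X_\Omega)$ comes from packing $\op{int}(X_\Omega)$ by the open balls of the weight expansion (the identification $X_{\Delta(c)}=B(c)$ together with the fact that integral affine maps of the moment image induce symplectomorphisms away from the axes, plus a Traynor-type limiting argument to absorb the axes), while the reverse inequality rests on the ECH computation of $c_k$ for concave toric domains in terms of lattice paths, followed by a combinatorial induction over the decomposition of Definition~\ref{def:te}. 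One terminological caution: for concave toric domains the relevant formula is a \emph{maximum} of $\Omega$-lengths over \emph{concave} integral paths $\Lambda$ with $\mc{L}(\Lambda)\le k$ (the version recalled in the proof of Lemma~\ref{lem:concavelb}, i.e.\ \cite[Thm.\ 1.21]{concave}), dual to the minimum over convex paths in Theorem~\ref{thm:ckconvex}; and that lattice-path formula is itself the substantial ECH input, which your sketch rightly identifies as the main obstacle but necessarily treats as a black box. As an outline of the proof in the cited source, this is accurate.
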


Note that by properties (i)--(iii) above, we have
\[
\op{vol}(X_\Omega) = \op{area}(\Omega) = \frac{1}{2}\sum_ia_i^2.
\]

It turns out that $\sum_ia_i$ is also finite, and can be described explicitly as follows.

\begin{definition}
Given a line segment $L$ in the plane, define its {\em affine length\/} $\ell_{\op{Aff}}(L)\in\R$ as follows. Let $v=(a,b)$ be the vector given by the difference between the endpoints of $L$.
\begin{itemize}
\item
If $a/b\notin\Q\cup\{\infty\}$, define $\ell_{\op{Aff}}(L)=0$.
\item
If $a/b\in\Q\cup\{\infty\}$, let $d$ be the largest real number such that $(a/d,b/d)\in\Z^2$, and define $\ell_{\op{Aff}}(L)=d$.
\end{itemize}
If $\gamma$ is an injective continuous path in the plane including line segments $L_1,\ldots$, define its affine length
\[
\ell_{\op{Aff}}(\gamma) = \sum_i\ell_{\op{Aff}}(L_i).
\]
\end{definition}

\begin{lemma}
\label{lem:mcduff}
\cite{mcp}
If $X_\Omega$ is a concave toric domain with weight expansion $W(\Omega)=(a_1,\ldots)$, then
\begin{equation}
\label{eqn:mcd}
\sum_ia_i = a(\Omega) + b(\Omega) - \ell_{\op{Aff}}(\partial_+\Omega).
\end{equation}
\end{lemma}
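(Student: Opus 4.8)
The plan is to prove this by induction on the recursive structure of the weight expansion, exactly mirroring the inductive definition of $\mc{T}(\Omega)$. Write $S(\Omega) = \sum_i a_i$ for the sum of the weights and $G(\Omega) = a(\Omega) + b(\Omega) - \ell_{\op{Aff}}(\partial_+\Omega)$ for the claimed value; I want to show $S(\Omega) = G(\Omega)$. The first step is to set up the induction carefully: by construction $\mc{T}(\Omega) = \{\Delta(c)\} \cup (\phi')^{-1}\mc{T}(\phi'(\Omega')) \cup (\phi'')^{-1}\mc{T}(\phi''(\Omega''))$, and since $\phi'$ and $\phi''$ are integral affine transformations, they preserve the property of being integral affine equivalent to $\Delta(a_i)$ and preserve the parameter $a_i$. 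Hence the multiset of weights of $\Omega$ is the disjoint union of $\{c\}$, the weights of $\phi'(\Omega')$, and the weights of $\phi''(\Omega'')$, giving
\[
S(\Omega) = c + S(\phi'(\Omega')) + S(\phi''(\Omega'')),
\]
with the convention that $S(\emptyset) = 0$. (One subtlety: the recursion must be shown to terminate or at least that the sum converges; this follows from $\sum a_i^2 = 2\,\op{area}(\Omega) < \infty$ together with the fact that the $a_i$ are bounded, or can be cited from \cite{concave}, so the formal manipulation of the infinite sum is justified.)

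The second step is to prove the matching recursion for $G$, i.e.
\[
G(\Omega) = c + G(\phi'(\Omega')) + G(\phi''(\Omega'')).
\]
This is where the geometric content lies and where I expect the main work to be. The curve $\partial_+\Omega$ decomposes into three pieces: the portion $\gamma'$ over $\mu_1 \in [0,t']$, the straight segment $\sigma$ from $(t',c-t')$ to $(t'',c-t'')$ lying on $\partial_+\Delta(c)$, and the portion $\gamma''$ over $\mu_1 \in [t'',c]$ — where if $t' = 0$ the piece $\gamma'$ is absent and similarly for $t''$. Since affine length is additive along paths, $\ell_{\op{Aff}}(\partial_+\Omega) = \ell_{\op{Aff}}(\gamma') + \ell_{\op{Aff}}(\sigma) + \ell_{\op{Aff}}(\gamma'')$. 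The segment $\sigma$ lies on a line of slope $-1$, so $\ell_{\op{Aff}}(\sigma) = t'' - t'$. Now I track the endpoints: $\Omega'$ is a concave toric domain with $a(\Omega') = t'$ and $b(\Omega') = f(0) - \text{(value at } t') = b - (c - t')$, and $\gamma'$ is its $\partial_+$; applying $\phi'(\mu_1,\mu_2) = (\mu_1, \mu_1 + \mu_2 - c)$, which is in $\op{SL}_2\Z$ up to translation hence preserves affine length, one computes $a(\phi'(\Omega')) = t'$, $b(\phi'(\Omega')) = t' + b - c$ (checking the image of the endpoints), and $\ell_{\op{Aff}}(\partial_+(\phi'(\Omega'))) = \ell_{\op{Aff}}(\gamma')$. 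A symmetric computation handles $\Omega''$ and $\phi''$. Assembling $G(\phi'(\Omega')) + G(\phi''(\Omega''))$ from these and adding $c$, the cross-terms in $t', t'', a, b, c$ should telescope to exactly $a + b - \ell_{\op{Aff}}(\partial_+\Omega) = G(\Omega)$; the degenerate cases $t' = 0$ or $t'' = c$ need to be checked separately but should work out because then the corresponding $\Omega'$ or $\Omega''$ is empty and contributes $G(\emptyset) = 0$, consistent with $a(\emptyset) = b(\emptyset) = 0$.

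The third step is the base case and the conclusion. The base case is when $\Omega = \Delta(c)$ itself (so $\Omega' = \Omega'' = \emptyset$ and the weight expansion is just $(c)$): here $S(\Delta(c)) = c$, while $a(\Delta(c)) = b(\Delta(c)) = c$ and $\partial_+\Delta(c)$ is a single segment of slope $-1$ with $\ell_{\op{Aff}} = c$, so $G(\Delta(c)) = c + c - c = c$, matching. Since $S$ and $G$ satisfy the same recursion with the same base case, and the recursion exhausts $\Omega$ (with convergence justified as above), we conclude $S(\Omega) = G(\Omega)$ for all concave toric domains, which is \eqref{eqn:mcd}. The main obstacle, as noted, is the bookkeeping in Step~2 — correctly computing how $a$, $b$, and the affine length of $\partial_+$ transform under $\phi'$ and $\phi''$ and under passing to the sub-domains, and handling the degenerate endpoint cases — but this is a finite and elementary, if fiddly, computation. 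An alternative to the termination/convergence worry: one could instead cite that the result holds for the truncated sums and pass to the limit using $\sum a_i^2 < \infty$, or invoke \cite{mcp} directly for the convergence statement and only verify the identity formally.
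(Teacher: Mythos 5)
Your overall strategy---show that $S(\Omega)=\sum_i a_i$ and $G(\Omega)=a(\Omega)+b(\Omega)-\ell_{\op{Aff}}(\partial_+\Omega)$ satisfy the same recursion under the splitting $\Omega\mapsto(\Delta(c),\Omega',\Omega'')$, with base case a triangle, the key computation being how $a$, $b$ and $\ell_{\op{Aff}}(\partial_+\cdot)$ transform under $\phi'$ and $\phi''$---is exactly the paper's, and the geometric bookkeeping you outline (additivity of affine length, the slope $-1$ middle segment contributing $t''-t'$, invariance of affine length under integral affine maps) is the same as in the paper's inductive step.

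However, your primary closing step has a genuine gap. The recursion tree of $\mc{T}(\Omega)$ is in general infinite (the weight expansion need not be finite and the tree need not have leaves along every branch), so ``$S$ and $G$ satisfy the same recursion with the same base case, hence $S=G$'' is not a valid deduction: the difference $h=G-S$ satisfies the homogeneous recursion $h(\Omega)=h(\phi'(\Omega'))+h(\phi''(\Omega''))$ and vanishes on triangles, and for a non-well-founded recursion this does not force $h\equiv 0$ unless you also show that the total $h$-mass of the subdomains remaining at depth $k$ tends to $0$. Moreover, your stated justification for convergence of $\sum_i a_i$ is false: $\sum_i a_i^2<\infty$ together with boundedness of the $a_i$ does not imply $\sum_i a_i<\infty$ (take $a_i=1/i$); indeed the finiteness of $\sum_i a_i$ is part of what the lemma asserts, not something to be assumed. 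The correct repair is exactly the route you relegate to an ``alternative'' at the end: define depth-$k$ truncations $\Omega_k$ (with $\Omega_1=\Delta(c)$ and $\Omega_k$ assembled from $\Delta(c)$ and the depth-$(k-1)$ truncations of $\Omega'$ and $\Omega''$), prove the identity $S(\Omega_k)=a(\Omega_k)+b(\Omega_k)-\ell_{\op{Aff}}(\partial_+\Omega_k)$ exactly, by an honest finite induction on $k$, then note that $S(\Omega_k)$ increases to $S(\Omega)$ (which in particular yields finiteness, since the right-hand side stays bounded) and pass to the limit. This is precisely the paper's proof; promote that aside to the main argument and the proposal becomes correct.
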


\begin{proof}
Following the construction in Definition~\ref{def:te}, we inductively define a sequence of domains $\Omega_k$ for $k\ge 1$ such that $X_{\Omega_k}$ is a concave toric domain, $\Omega_k\subset \Omega_{k+1}$, and $\overline{\bigcup_{k}{\Omega_k}}=\Omega$, as follows. Using the notation of Definition~\ref{def:te}:
\begin{itemize}
\item
$\Omega_1 = \Delta(c)$.
\item
If $k>1$, then
\[
\Omega_k = \Delta(c) \cup (\phi')^{-1}(\phi'(\Omega')_{k-1}) \cup (\phi'')^{-1}(\phi''(\Omega'')_{k-1}).
\]
Here we omit the terms corresponding to $\Omega'$ or $\Omega''$ when those domains are empty.
\end{itemize}
Observe that $X_{\Omega_k}$ has a finite weight expansion with at most $2^k - 1$ terms. Moreover these are all terms in the weight expansion of $X_\Omega$; and if $S(\Omega)$ denotes the sum of the terms in the weight expansion $W(\Omega)$, then $\lim_{k\to\infty}S({\Omega_k})=S(\Omega)$.

We will prove by induction on $k$ that for every concave toric domain $X_\Omega$, we have
\begin{equation}
\label{eqn:mcdk}
S(\Omega_k) = a({\Omega_k}) + b({\Omega_k}) - \ell_{\op{Aff}}(\partial_+\Omega_k).
\end{equation}
The lemma then follows by fixing $\Omega$ and taking the limit of \eqref{eqn:mcdk} as $k\to\infty$.

If $k=1$, then both sides of equation \eqref{eqn:mcdk} are equal to $c$ above.

Now suppose that $k>1$. For simplicity we assume that both $\Omega'$ and $\Omega''$ are nonempty; the other cases work similarly. By induction we can assume that
\[
\begin{split}
S(\Omega'_{k-1}) &= a({\Omega'_{k-1}}) + b({\Omega'_{k-1}}) - \ell_{\op{Aff}}(\partial_+\Omega'_{k-1}),\\
S(\Omega''_{k-1}) &= a({\Omega''_{k-1}}) + b({\Omega''_{k-1}}) - \ell_{\op{Aff}}(\partial_+\Omega''_{k-1}).
\end{split}
\]
By construction we have
\[
\begin{split}
S(\Omega_k) &= c + S(\Omega'_{k-1}) + S(\Omega''_{k-1}),\\
a({\Omega_k}) &= c + a({\Omega''_{k-1}}),\\
b({\Omega_k}) &= c + b({\Omega'_{k-1}}).
\end{split}
\]
Combining the above equations, we obtain
\begin{equation}
\label{eqn:ctae}
S(\Omega_k) - a({\Omega_k}) - b({\Omega_k}) = -c + a({\Omega'_{k-1}}) + b({\Omega''_{k-1}}) - \ell_{\op{Aff}}(\partial_+\Omega'_{k-1}) - \ell_{\op{Aff}}(\partial_+\Omega''_{k-1}).
\end{equation}

Now observe that $\partial_+\Omega_k$ consists of the following:
\begin{itemize}
\item
The curve $(\phi')^{-1}(\partial_+\Omega'_{k-1})$ from $(0,c+b({\Omega'_{k-1}}))$ to $(a({\Omega'_{k-1}}),c-a({\Omega'_{k-1}}))$.
\item
The line segment from the latter point to $(c-b({\Omega''_{k-1}}),b({\Omega''_{k-1}}))$.
\item
The curve $(\phi'')^{-1}(\partial_+\Omega''_{k-1})$ from the latter point to $(c+a({\Omega''_{k-1}}),0)$. 
\end{itemize}
Since affine length is invariant under integral affine transformations, it follows that
\[
\ell_{\op{Aff}}(\partial_+\Omega_k) = \ell_{\op{Aff}}\left(\partial_+\Omega'_{k-1}\right) +  \left(c-a\left({\Omega'_{k-1}}\right) - b\left({\Omega''_{k-1}}\right)\right) + \ell_{\op{Aff}}\left(\partial_+\Omega''_{k-1}\right).
\]
Combining this last equation with \eqref{eqn:ctae} proves \eqref{eqn:mcdk}.
\end{proof}

As a corollary, we obtain a relation between the weight expansion and the Ruelle invariant in the strictly concave case:

\begin{corollary}
If $X_\Omega$ is a strictly concave toric domain (or more generally any concave toric domain such that $\partial_+\Omega$ does not contain any line segments of rational slope) with weight expansion $W(\Omega)=(a_1,\ldots)$, then
\begin{equation}
\label{eqn:cwe}
\sum_ia_i = a(\Omega) + b(\Omega).
\end{equation}
\end{corollary}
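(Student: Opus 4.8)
The plan is to read this off directly from Lemma~\ref{lem:mcduff}. That lemma gives
\[
\sum_i a_i = a(\Omega) + b(\Omega) - \ell_{\op{Aff}}(\partial_+\Omega),
\]
so the entire content of the corollary is the assertion that $\ell_{\op{Aff}}(\partial_+\Omega) = 0$ under the stated hypothesis, after which one simply substitutes.

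To verify this, I would unwind the definition of affine length. By definition $\ell_{\op{Aff}}(\partial_+\Omega)$ is the sum of the affine lengths $\ell_{\op{Aff}}(L_i)$ over the maximal line segments $L_i$ contained in $\partial_+\Omega$; since $\partial_+\Omega$ is a convex curve joining two points of a bounded region, there are at most countably many such segments and the sum is well defined. In the strictly concave case, $\partial_+\Omega$ is the graph of a smooth strictly convex function (indeed $f''>0$ everywhere forces $f$ strictly convex), hence contains no line segment at all, so the sum is empty and $\ell_{\op{Aff}}(\partial_+\Omega)=0$. In the more general case, by hypothesis no $L_i$ has rational slope, and $\partial_+\Omega$, being the graph of a function $f$, contains no vertical segment; hence for each $i$ the difference vector $(a,b)$ of the endpoints of $L_i$ satisfies $a/b\notin\Q\cup\{\infty\}$, so $\ell_{\op{Aff}}(L_i)=0$ by the first case of the definition. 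Summing over $i$ gives $\ell_{\op{Aff}}(\partial_+\Omega)=0$, and the corollary follows.

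I do not anticipate any genuine obstacle here: the statement is an immediate consequence of Lemma~\ref{lem:mcduff}, and the only point requiring attention is the routine observation that the hypothesis on the slopes of line segments in $\partial_+\Omega$ (together with the fact that $\partial_+\Omega$ is a graph, which rules out vertical segments) makes every term in the defining sum for $\ell_{\op{Aff}}(\partial_+\Omega)$ vanish.
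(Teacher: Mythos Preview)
Your proposal is correct and follows exactly the paper's approach: the paper's proof is a one-sentence appeal to Lemma~\ref{lem:mcduff} together with the observation that the affine length of $\partial_+\Omega$ vanishes because it contains no line segments of rational slope. Your write-up simply fills in the routine verification of that last point in more detail than the paper does.
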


\begin{proof}
This follows from Lemma~\ref{lem:mcduff} because $\partial_+\Omega$ contains no line segments of rational slope, so its affine length is zero.
\end{proof}

\subsection{An estimate from the weight expansion}

\begin{lemma}
\label{lem:cs}
Let $(a_i)_{i=1,\ldots}$ be a (possibly finite) sequence of positive real nubers with $\sum_ia_i<\infty$. Write  $X=\coprod_iB(a_i)$ and $V=\op{vol}(X) = \frac{1}{2}\sum_ia_i^2$. Then
\[
\lim\sup_{k\to\infty}\left(c_k\left(X\right) - 2\sqrt{kV}\right) \le -\frac{1}{2}\sum_ia_i.
\]
\end{lemma}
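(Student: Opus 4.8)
The plan is to reduce the statement about $c_k(X)$ for the disjoint union $X = \coprod_i B(a_i)$ to a more-or-less self-contained counting problem involving the ball formula \eqref{eqn:ckball} and the disjoint-union formula \eqref{eqn:du}. First I would recall that by \eqref{eqn:du}, $c_k(X) = \sup_{\sum k_i = k}\sum_i c_{k_i}(B(a_i))$, and by \eqref{eqn:ckball}, $c_{k_i}(B(a_i)) = d_i a_i$ where $d_i$ is determined by $d_i^2 + d_i \le 2k_i \le d_i^2 + 3d_i$. The key elementary estimate is that for a ball, $c_m(B(a))^2 = d^2 a^2$ while $2m \ge \tfrac12(d^2+d) + \tfrac12(d^2+3d)$ is roughly $d^2 + 2d$; more precisely one has $c_m(B(a)) \le \sqrt{2m}\,a$ always (since $d^2 \le d^2 + d \le 2m$), and the deficit is quantitatively $\sqrt{2m}\,a - d a \ge$ something like $a$ for the ``typical'' $m$. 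The right way to package this: for each ball of weight $a$, the function $m \mapsto \sqrt{2m}\,a - c_m(B(a))$ has liminf (as $m\to\infty$) equal to $\tfrac12 a$ and limsup equal to $\tfrac32 a$ (this is exactly the content of \eqref{eqn:deviation} reorganized), but for the present purpose I only need a lower bound of the form $\sqrt{2m}\,a - c_m(B(a)) \ge \tfrac12 a - o(a\cdot\text{stuff})$ that is uniform enough to sum.

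The main steps, in order: (1) Fix $\epsilon > 0$ and choose $N$ so that $\sum_{i > N} a_i < \epsilon$ and also $\sum_{i>N} a_i^2 < \epsilon^2$. Split $X = X_{\le N} \sqcup X_{>N}$ where $X_{\le N} = \coprod_{i \le N} B(a_i)$, and use the disjoint-union and ball estimates to control the contribution of the tail $X_{>N}$ crudely: by \eqref{eqn:vp} applied term-by-term, or directly, $c_k(X_{>N}) \le 2\sqrt{k\,\op{vol}(X_{>N})} + (\text{bounded error})$, and $\op{vol}(X_{>N}) < \epsilon^2/2$, so this contributes at most $O(\epsilon\sqrt k)$ plus lower-order. (2) For the finite part: given the optimal partition $\sum_{i} k_i = k$ achieving (near) the supremum for $c_k(X)$, bound $\sum_{i\le N} c_{k_i}(B(a_i)) \le \sum_{i \le N}\bigl(\sqrt{2 k_i}\, a_i - \tfrac12 a_i + r_i\bigr)$ where $r_i$ is a small error term coming from the precise ball formula (the ``$-\tfrac12 a_i$'' is the worst-case value, i.e. the smallest deficit; for a rigorous \emph{upper} bound on $c_k$ we want the \emph{smallest} deficit, which is $\tfrac12 a_i$, and indeed $\sqrt{2m}-d \ge \tfrac12$ needs checking but follows since $d \le \sqrt{2m} - \tfrac12$ is equivalent to $d^2 + d \le 2m - \tfrac14$, slightly weaker than $d^2+d\le 2m$... so actually one gets $\sqrt{2m} - d \ge$ a quantity tending to $\tfrac12$, giving $r_i \to 0$). (3) Now apply Cauchy--Schwarz: $\sum_{i\le N}\sqrt{2k_i}\,a_i \le \sqrt{\sum_{i\le N} 2k_i}\cdot\sqrt{\sum_{i\le N} a_i^2} \le \sqrt{2k}\cdot\sqrt{2\op{vol}(X_{\le N})} = 2\sqrt{k\,\op{vol}(X_{\le N})} \le 2\sqrt{kV}$. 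Combining, $c_k(X) \le 2\sqrt{kV} + O(\epsilon\sqrt k) - \tfrac12\sum_{i\le N} a_i + (\text{errors} \to 0)$, hence $\limsup_k\bigl(c_k(X) - 2\sqrt{kV}\bigr) \le -\tfrac12\sum_{i\le N}a_i + O(\epsilon\sqrt k)$...

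Wait — the $O(\epsilon\sqrt k)$ term does not go away in the limsup. The fix is to handle the tail differently: rather than discarding it, I would keep \emph{all} the balls in the Cauchy--Schwarz step (which is fine since $\sum a_i^2 = 2V < \infty$) so that the main term is exactly $2\sqrt{kV}$ with no $\epsilon$-loss, and only use the finite truncation to harvest the deficit $-\tfrac12\sum_{i\le N}a_i$, noting that the tail balls contribute deficits of the \emph{correct sign} (each $\sqrt{2k_i}\,a_i - c_{k_i}(B(a_i)) \ge 0$) so dropping them only helps the upper bound. Concretely: $c_k(X) = \sup\sum_i c_{k_i}(B(a_i)) \le \sup\bigl[\sum_i \sqrt{2k_i}\,a_i - \sum_{i\le N}(\sqrt{2k_i}\,a_i - c_{k_i}(B(a_i)))\bigr]$, then bound the subtracted finite sum below by $\tfrac12\sum_{i\le N}a_i - o(1)$ provided each $k_i \to \infty$ — which requires an argument that in a near-optimal partition, the mass $k_i$ allotted to each of the finitely many largest balls tends to infinity as $k\to\infty$ (otherwise that ball contributes a bounded amount $c_{k_i}(B(a_i)) = d_i a_i$, and reallocating improves the sum for $k$ large; this is a standard compactness/pigeonhole argument). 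Then $\sum_i\sqrt{2k_i}\,a_i \le 2\sqrt{kV}$ by Cauchy--Schwarz over all $i$, giving $c_k(X) - 2\sqrt{kV} \le -\tfrac12\sum_{i\le N}a_i + o(1)$, so $\limsup_k(c_k(X)-2\sqrt{kV}) \le -\tfrac12\sum_{i\le N}a_i$; letting $N\to\infty$ finishes it.

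The main obstacle I anticipate is step (2)/(3) intertwined: making precise, uniformly in the near-optimal partition, the claim that each of the first $N$ balls receives $k_i\to\infty$ and that the per-ball deficit $\sqrt{2k_i}\,a_i - c_{k_i}(B(a_i))$ is then $\ge \tfrac12 a_i - o(a_i)$. The inequality $\sqrt{2m} - d \ge \tfrac12$ is not literally true for all $m$ (e.g. small $m$), only asymptotically, so one must track the error $\sqrt{2m} - d = \tfrac12 + O(1/\sqrt m)$ carefully — but since there are only $N$ terms and each $k_i\to\infty$, the total error is $N\cdot o(1) = o(1)$, which is harmless. The ``$k_i\to\infty$'' claim itself is the genuinely nontrivial point and deserves a clean lemma: if some $a_j$ (among the top $N$) got only bounded $k_j$ along a subsequence $k\to\infty$, then since $c_k(X)$ grows like $2\sqrt{kV}\to\infty$, almost all the capacity is carried by other balls; one can then move a bit of $k$-mass onto ball $j$ and, using strict monotonicity of $m\mapsto c_m(B(a_j))$ in steps, increase the total — contradicting near-optimality. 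I'd state and prove this rearrangement lemma first, then the rest is bookkeeping.
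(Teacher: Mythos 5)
Your argument is essentially the paper's: both rewrite $c_k(X)$ via the disjoint-union formula \eqref{eqn:du} and the ball formula \eqref{eqn:ckball}, apply Cauchy--Schwarz over \emph{all} balls at once so that the main term is exactly $2\sqrt{kV}$ with no $\epsilon$-loss, and then harvest a deficit of $\tfrac12 a_i$ per ball from the elementary fact that $\sqrt{d_i^2+d_i}-d_i\to\tfrac12$ once the index allotted to each fixed ball tends to infinity; your self-correction (keeping the tail inside the Cauchy--Schwarz step rather than paying $O(\epsilon\sqrt{k})$ for it) lands you precisely on the paper's bookkeeping. The one point of divergence is the proof of the sub-claim that $k_i\to\infty$ (equivalently $d(k)_i\to\infty$) for each fixed $i$: you propose a direct reallocation argument (if $k_j$ stays bounded, the marginal gain of shifting index onto ball $j$ eventually exceeds the marginal loss from any ball carrying large index, contradicting near-optimality), whereas the paper argues by contradiction with the volume property: if $d(k)_i$ stayed bounded along a subsequence, the same Cauchy--Schwarz estimate with ball $i$ deleted would force $\liminf_{k\to\infty} c_k(X)^2/k\le 4\op{vol}(X\setminus B(a_i))<4V$, contradicting \eqref{eqn:vp} as established for such disjoint unions in \cite[Prop.\ 8.4]{qech}. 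Both routes are sound; the paper's is quicker to write down rigorously, while yours is more self-contained (it avoids invoking the volume property for infinite disjoint unions of balls) at the cost of some care with the integer steps in the reallocation.
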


\begin{corollary}
\label{cor:concaveub}
If $X_\Omega$ is a concave toric domain such that $\partial_+\Omega$ does not contain any line segments of rational slope, then
\[
\lim\sup_{k\to\infty} e_k(X_\Omega) \le -\frac{a(\Omega)+b(\Omega)}{2}.
\]
\end{corollary}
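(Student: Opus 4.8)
The corollary is an immediate consequence of Lemma~\ref{lem:cs} combined with the structural results already established. The plan is as follows. First, recall that $X_\Omega$ is a concave toric domain, so by the weight expansion theorem (equation~\eqref{eqn:ckw}) its ECH capacities satisfy $c_k(X_\Omega) = c_k\left(\coprod_i B(a_i)\right)$, where $W(\Omega) = (a_1, a_2, \ldots)$ is the weight expansion. Second, observe that $\op{vol}(X_\Omega) = \op{area}(\Omega) = \frac{1}{2}\sum_i a_i^2$, as noted after equation~\eqref{eqn:ckw}; this matches the quantity $V$ appearing in Lemma~\ref{lem:cs} applied to the sequence $(a_i)$. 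Third, since by hypothesis $\partial_+\Omega$ contains no line segments of rational slope, Lemma~\ref{lem:mcduff} (via the corollary giving equation~\eqref{eqn:cwe}) yields $\sum_i a_i = a(\Omega) + b(\Omega)$; in particular this sum is finite, so the hypotheses of Lemma~\ref{lem:cs} are satisfied.

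Putting these together, Lemma~\ref{lem:cs} gives
\[
\limsup_{k\to\infty}\left(c_k(X_\Omega) - 2\sqrt{k\op{vol}(X_\Omega)}\right) \le -\frac{1}{2}\sum_i a_i = -\frac{a(\Omega)+b(\Omega)}{2}.
\]
By the definition of the error term~\eqref{eqn:errorterm}, the left-hand side is exactly $\limsup_{k\to\infty} e_k(X_\Omega)$, which is the desired inequality.

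There is essentially no obstacle here beyond bookkeeping: the only point requiring care is confirming that the finiteness hypothesis $\sum_i a_i < \infty$ in Lemma~\ref{lem:cs} holds, which is precisely what Lemma~\ref{lem:mcduff} provides (and which also identifies the value of the sum with $a(\Omega)+b(\Omega)$ in the rational-slope-free case). The real content of the estimate lives in Lemma~\ref{lem:cs} itself, whose proof would presumably proceed by choosing, for each large $k$, a partition $\sum_i k_i = k$ nearly optimizing the disjoint-union formula~\eqref{eqn:du}, using the ball formula~\eqref{eqn:ckball} to write $c_{k_i}(B(a_i)) \approx a_i\sqrt{2k_i}$ with an error term of order $a_i$, and then balancing the allocation of $k_i$ proportionally to $a_i^2$ so that the main terms sum to $2\sqrt{kV}$ while the accumulated error terms contribute $-\frac{1}{2}\sum_i a_i$; but that lemma is proved separately.
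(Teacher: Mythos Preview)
Your proof is correct and follows essentially the same approach as the paper: apply Lemma~\ref{lem:cs} to the weight expansion, using \eqref{eqn:ckw} to identify the ECH capacities, \eqref{eqn:volarea} together with property (iii) to match the volume, \eqref{eqn:cwe} to evaluate $\sum_i a_i$ (and verify finiteness), and \eqref{eqn:errorterm} to rewrite the conclusion in terms of $e_k$. The closing speculation about the proof of Lemma~\ref{lem:cs} is roughly in the right spirit but unnecessary here, since that lemma is established separately.
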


\begin{proof}
This follows from Lemma~\ref{lem:cs} by plugging in equations \eqref{eqn:errorterm}, \eqref{eqn:ckw}, \eqref{eqn:volarea}, and \eqref{eqn:cwe}.
\end{proof}

\begin{proof}[Proof of Lemma~\ref{lem:cs}.]
By equations \eqref{eqn:du} and \eqref{eqn:ckball}, we have
\begin{equation}
\label{eqn:ckmax}
c_k(X) = \sup\left\{\sum_ia_id_i \;\bigg|\;\sum_i(d_i^2+d_i)\le 2k\right\}
\end{equation}
where the $d_i$ are nonnegative integers. Now if we put the sequence $(a_i)$ in nonincreasing order, then in the above supremum, we can restrict to the case where $d_i=0$ for $i>k$. There are then only finitely many possibilities, so we can write `max' instead of `sup' in \eqref{eqn:ckmax}.

For each $k$, choose a sequence $d(k) = \{d(k)_i\}_{i=1,\ldots}$ realizing the maximum in \eqref{eqn:ckmax}. In particular, we have
\begin{gather}
\label{eqn:csck}
\sum_ia_id(k)_i = c_k(X),\\
\label{eqn:cs2k}
\sum_i(d(k)_i^2+d(k)_i) \le 2k.
\end{gather}

By \eqref{eqn:cs2k} and the Cauchy-Schwarz inequality, for each $k$ we have
\[
\sum_ia_i\sqrt{d(k)_i^2+d(k)_i}\le \sqrt{2V}\sqrt{2k}.
\]
Combining this with \eqref{eqn:csck}, we have
\begin{equation}
\label{eqn:ucs}
c_k(X) - 2\sqrt{kV} \le - \sum_ia_i\left(\sqrt{d(k)_i^2+d(k)_i}-d(k)_i\right).
\end{equation}
To complete the proof, it is enough to show that for fixed $i$ we have
\begin{equation}
\label{eqn:tctp}
\lim_{k\to\infty}d(k)_i = \infty,
\end{equation}
so that
\[
\lim_{k\to\infty}\left(\sqrt{d(k)_i^2+d(k)_i}-d(k)_i\right) = \frac{1}{2}.
\]

To prove \eqref{eqn:tctp}, suppose to the contrary that $\lim\inf_{k\to\infty}d(k)_i<\infty$. Then it follows similary to \eqref{eqn:ucs} that
\[
\lim\inf_{k\to\infty}\left(c_k(X) - 2\sqrt{k\left(V-\frac{1}{2}a_i^2\right)}\right) \le 0.
\]
Thus
\[
\lim\inf_{k\to\infty}\frac{c_k(X)^2}{k} \le 4\op{vol}\left(X\setminus B(a_i)\right).
\]
However the argument in \cite[Prop.\ 8.4]{qech} shows that $X$ satisfies the volume property \eqref{eqn:vp}, which is a contradiction.
\end{proof}

\subsection{Lattice point estimates} 

If $\Omega$ is a domain in the nonnegative quadrant of $\R^2$, define
\[
\widehat{\Omega} = \{(\mu_1,\mu_2)\in\R^2 \mid (|\mu_1|,|\mu_2|)\in\Omega\}.
\]

\begin{definition}
\label{def:ctd}
A {\em convex toric domain\/} is a toric domain $X_\Omega$ such that $\widehat{\Omega}$ is compact and convex with nonempty interior. Let $a(\Omega)$ and $b(\Omega)$ denote the intersections of $\partial\widehat{\Omega}$ with the positive $\mu_1$-axis and positive $\mu_2$-axis, and let $\partial_+\Omega$ denote the closure of the part of $\partial_\Omega$ not on the axes; this is a path from $(0,b(\Omega))$ to $(a(\Omega),0)$.
\end{definition}

We now prove the following estimate, which is similar to Corollary~\ref{cor:concaveub} but proved by different methods:

\begin{lemma}
\label{lem:convexub}
Let $X_\Omega$ be a convex toric domain such that $\partial_+\Omega$ is the graph of a strictly concave $C^2$ function\footnote{This is slighty more general than a ``strictly convex toric domain'', because $\partial X_\Omega$ might not be smooth.}. Then
\[
\lim\sup_{k\to\infty} e_k(X_\Omega) \le -\frac{a(\Omega)+b(\Omega)}{2}.
\]
\end{lemma}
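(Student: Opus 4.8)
The plan is to reduce Lemma~\ref{lem:convexub} to the formula for ECH capacities of convex toric domains from \cite{cg} and a lattice-counting argument, paralleling the structure of the proof of Lemma~\ref{lem:cs} but replacing the ball-packing bookkeeping with an explicit count of lattice points under a scaled copy of $\Omega$. Recall from \cite{cg} that for a convex toric domain $X_\Omega$ one has
\[
c_k(X_\Omega) = \min\left\{\|(a,b)\|_\Omega^* \;\middle|\; (a,b)\in\Z_{\ge0}^2,\; \#\left(\{(x,y)\in\Z_{\ge0}^2 : x\le a,\, y\le b\}\right) \ge k+1 \right\}
\]
where $\|\cdot\|_\Omega^*$ is (essentially) the support function of $\Omega$; more usefully, $c_k$ equals the smallest $\lambda$ such that the ``triangle'' $\{(x,y): \text{some linear functional determined by } \partial_+\Omega \le \lambda\}$ contains at least $k+1$ lattice points in the closed first quadrant. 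First I would set this up carefully and fix, for each $k$, a minimizing lattice point or equivalently the scaling $\lambda_k$ with $c_k(X_\Omega)=\lambda_k$.

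Next I would carry out the asymptotic count. For $\lambda>0$ let $N(\lambda)$ denote the number of lattice points $(x,y)\in\Z_{\ge0}^2$ lying in the region $\lambda\cdot\Omega'$ bounded by the axes and the curve dual to $\partial_+\Omega$; since $\partial_+\Omega$ is the graph of a strictly concave $C^2$ function, the dual region $\Omega'$ is a genuine convex domain in the first quadrant whose area is $\op{vol}(X_\Omega)$ (this is the standard fact that convex toric domains are determined by their ``moment polytope dual'' with area equal to the volume). The classical lattice-point estimate for a dilated convex region with $C^2$ boundary of nonvanishing curvature gives
\[
N(\lambda) = \op{area}(\Omega')\,\lambda^2 + \tfrac{1}{2}\,(\text{boundary contribution})\,\lambda + O(\lambda^{2/3}),
\]
but for the upper bound on $e_k$ one only needs the one-sided statement that the number of lattice points with $x\le a(\Omega)\lambda$, $y\le b(\Omega)\lambda$ on the boundary and axes forces $N(\lambda) \le \op{vol}(X_\Omega)\lambda^2 + \tfrac12(a(\Omega)+b(\Omega))\lambda + o(\lambda)$. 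Indeed, sliding the region inward by a lattice step in each coordinate direction and comparing $N(\lambda)$ with the area shows that the lattice points are ``under-counted'' relative to the area by roughly half the axis-lengths $a(\Omega)\lambda$ and $b(\Omega)\lambda$ (the two boundary edges on the axes each contribute a half-row of points counted with multiplicity $1/2$ by the trapezoidal comparison). Setting $k+1 \le N(\lambda_k)$ and solving $\lambda_k$ in terms of $k$ then yields $\lambda_k \ge 2\sqrt{k\op{vol}(X_\Omega)} - \tfrac12(a(\Omega)+b(\Omega)) + o(1)$... wait — I need the inequality in the direction that bounds $c_k$ from above, so I would instead use that $c_k(X_\Omega)$ is the \emph{smallest} $\lambda$ with $N(\lambda)\ge k+1$, hence $c_k(X_\Omega) \le \lambda$ for any $\lambda$ with $N(\lambda)\ge k+1$; picking $\lambda$ so that $\op{vol}(X_\Omega)\lambda^2 + \tfrac12(a(\Omega)+b(\Omega))\lambda \ge k+1 + (\text{error})$ and inverting gives $c_k(X_\Omega) - 2\sqrt{k\op{vol}(X_\Omega)} \le -\tfrac12(a(\Omega)+b(\Omega)) + o(1)$, which is exactly the claim after taking $\limsup$.

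The main obstacle is the lattice-point estimate: one must show that the \emph{linear} coefficient in the count of lattice points under $\lambda\cdot\Omega'$ is precisely $\tfrac12(a(\Omega)+b(\Omega))$ and not something involving the curved part of the boundary, and that the remaining error is genuinely $o(\lambda)$. The key point making this work is that the boundary of the relevant region consists of two \emph{straight} segments along the coordinate axes — of lengths proportional to $a(\Omega)$ and $b(\Omega)$ — together with the strictly convex curved arc $\partial_+\Omega$; lattice points on a strictly convex $C^2$ arc contribute only $O(\lambda^{2/3})$ (by a classical result, e.g.\ the divisor-type bound for convex curves with nonvanishing curvature), while each axis segment contributes a linear term that, under the natural trapezoidal/Euler–Maclaurin comparison between the lattice-point count and the area, is counted with weight exactly $1/2$. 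I would isolate this as a standalone sublemma about counting lattice points in $\lambda$-dilates of convex regions whose boundary meets each coordinate axis in a segment, prove it by the inclusion–exclusion ``shift the region inward'' trick combined with the van der Corput/curvature bound on the curved arc, and then feed it into the capacity formula from \cite{cg} as above. The rest is elementary algebra: solving a quadratic asymptotically and taking the $\limsup$.
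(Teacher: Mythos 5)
Your overall strategy coincides with the paper's: invoke the lattice-path formula for ECH capacities of convex toric domains, dilate $\Omega$ by the smallest factor $r$ for which $r\Omega$ contains at least $k+1$ lattice points, show via a van der Corput--type estimate that the lattice count exceeds $\op{Area}(r\Omega)$ by $\tfrac{r}{2}(a(\Omega)+b(\Omega))+O(r^{2/3})$ (the strictly convex $C^2$ arc contributing only the error term, the two axis segments contributing the linear term with weight $1/2$), and invert the resulting quadratic. Your proposed sublemma is exactly the content of the paper's appeal to Chaix's refinement of van der Corput applied to the symmetrized region $(r-\epsilon)\widehat{\Omega}$ with smoothed corners.

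Two points need repair before this is a proof. First, the capacity formula you start from is misstated: by Theorem~\ref{thm:ckconvex} the minimum is of the $\Omega$-length $\ell_\Omega(\Lambda)$ over \emph{all} convex integral paths $\Lambda$ with $\mc{L}(\Lambda)\ge k+1$, not over rectangles (your version is the polydisk formula), and $c_k$ is not literally ``the smallest dilate of a fixed region containing $k+1$ lattice points.'' What you actually need is the one-sided chain $c_k(X_\Omega)\le\ell_\Omega(\Lambda)\le\ell_\Omega(\partial_+(r\Omega))$, where $\Lambda$ is the convex integral path bounding the convex hull of $r\Omega\cap\Z^2$; the first inequality is the existence half of the formula, and the second requires observing that replacing arcs by chords does not increase $\Omega$-length because $\|\cdot\|_\Omega^*$ is a norm. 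One must then actually compute $\ell_\Omega(\partial_+(r\Omega))$, which the paper does via the equality case of the Wulff isoperimetric inequality: $\ell_\Omega(\partial_+(r\Omega))=2\sqrt{\op{vol}(X_\Omega)\op{Area}(r\Omega)}=2r\op{Area}(\Omega)$. Second, your ``dual region $\Omega'$ of area $\op{vol}(X_\Omega)$ whose $\lambda$-dilate has boundary of $\Omega$-length $\lambda$'' is internally inconsistent: with that normalization $N(\lambda)\approx\op{vol}(X_\Omega)\lambda^2\ge k$ forces $\lambda\approx\sqrt{k/\op{vol}(X_\Omega)}$, which contradicts $c_k\approx 2\sqrt{k\op{vol}(X_\Omega)}$. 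The consistent choice would be $\Omega'=\Omega/(2\op{Area}(\Omega))$, of area $1/(4\op{vol}(X_\Omega))$; simpler still is to drop the dual region and work directly with $r\Omega$ as above. With these corrections your argument becomes the paper's.
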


To prove this lemma, we need to recall some material from \cite{beyond}. Let $\Omega$ be a domain as in Definition~\ref{def:ctd}. If $v$ is a vector in $\R^2$, define
\[
\|v\|_{\Omega}^* = \max\left\{\langle v,w\rangle \mid w\in\widehat{\Omega}\right\}.
\]
Note that $\|\cdot\|_\Omega^*$ is a norm; it is the dual of the norm with unit ball $\widehat{\Omega}$. If $\gamma:[\alpha,\beta]\to\R^2$ is a continuous, piecewise differentiable parametrized curve, define its {\em $\Omega$-length\/} by
\begin{equation}
\label{eqn:omegalength}
\ell_\Omega(\gamma) = \int_\alpha^\beta\|J\gamma'(t)\|_\Omega^*dt
\end{equation}
where $J=\begin{pmatrix}0&-1\\1&0\end{pmatrix}$.
The Wulff isoperimetric inequality \cite{bm,wulff} implies that if $\gamma$ is the boundary of a compact region $R$, then
\begin{equation}
\label{eqn:wulff}
\ell_\Omega(\gamma)^2\ge 4\op{Area}(\widehat{\Omega})\op{Area}(R),
\end{equation}
with equality if and only if $R$ is a scaling and translation of $\widehat{\Omega}$. Below we just need to know that equality holds in \eqref{eqn:wulff} when $R$ is a scaling of $\widehat{\Omega}$, which follows by direct calculation.

\begin{definition}
A {\em convex integral path\/} is a polygonal path $\Lambda$ in the nonnegative quadrant from the point $(0,b)$ to the point $(a,0)$, for some nonnegative integers $a$ and $b$, with vertices at lattice points, such that if $R$ denotes the region bounded by $\Lambda$ and the line segments from $(0,0)$ to $(a,0)$ and from $(0,0)$ to $(0,b)$, then $\widehat{R}$ is convex. Define $\mc{L}(\Lambda)$ to be the number of lattice points in $R$, including lattice points on the boundary.
\end{definition}

We now have the following theorem\footnote{The statement in \cite{beyond} looks slightly different, writing $\mc{L}(\Lambda)=k+1$ instead of $\mc{L}(\Lambda)\ge k+1$ in \eqref{eqn:ckconvex}. However this makes no difference, as any convex integral path $\Lambda$ with $\mc{L}(\Lambda)>k+1$ can be ``shrunk'' to a convex integral path with $\mc{L}(\Lambda)=k+1$ without increasing $\Omega$-length; see the proof of Lemma~\ref{lem:convexub} below.}, proved in \cite[Prop. 5.6]{beyond}, as a special case of \cite[Cor.\ A.5]{cg}:

\begin{theorem}
\label{thm:ckconvex}
Let $X_\Omega$ be a convex toric domain. Then
\begin{equation}
\label{eqn:ckconvex}
c_k(X) = \min\{\ell_\Omega(\Lambda)\mid \mc{L}(\Lambda)\ge k+1\}.
\end{equation}
Here the minimum is over convex integral paths $\Lambda$.
\end{theorem}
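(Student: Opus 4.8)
The plan is to obtain \eqref{eqn:ckconvex} by quoting the combinatorial computation of the ECH capacities of convex toric domains due to Cristofaro-Gardiner, and then reconciling two slightly different bookkeeping conventions. First I would recall the statement of \cite[Cor.\ A.5]{cg} (and its packaging in \cite[Prop.\ 5.6]{beyond}): for a convex toric domain $X_\Omega$, one has $c_k(X_\Omega) = \min_\Lambda \ell_\Omega(\Lambda)$, where $\Lambda$ ranges over convex integral paths with exactly $k+1$ lattice points in the enclosed region, and $\ell_\Omega$ is the combinatorial length measured with the norm dual to $\widehat{\Omega}$. A small but necessary step is to check that this combinatorial $\ell_\Omega$ agrees on polygonal paths with the integral definition \eqref{eqn:omegalength}: both are additive over edges, so it suffices to verify that a single edge with primitive integer direction $v$ contributes $\|Jv\|_\Omega^*$, which is immediate from the definitions of $J$, of the dual norm $\|\cdot\|_\Omega^*$, and of $\widehat{\Omega}$; one just has to be careful about orientation.

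It then remains to replace the constraint $\mc{L}(\Lambda)=k+1$ by $\mc{L}(\Lambda)\ge k+1$. One inequality is free: the minimum over the larger family $\{\mc{L}(\Lambda)\ge k+1\}$ is at most the minimum over $\{\mc{L}(\Lambda)=k+1\}$. For the reverse I would use the ``shrinking'' argument referred to in the footnote after the theorem (and carried out in detail in the proof of Lemma~\ref{lem:convexub}): given a convex integral path $\Lambda$ with $\mc{L}(\Lambda)>k+1$, perform an elementary move that replaces a corner of $\Lambda$ by a shortcut through a lattice point on the inward side, chosen so that $\widehat{R}$ stays convex, so that $\mc{L}$ strictly decreases, and so that $\mc{L}$ does not drop below $k+1$. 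Because $\|\cdot\|_\Omega^*$ is a norm and the new two-edge detour lies ``inside'' the old corner, the triangle inequality applied edge by edge shows the $\Omega$-length does not increase. Iterating terminates with a convex integral path with $\mc{L}=k+1$ and no larger $\Omega$-length, which gives the missing inequality.

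The main obstacle is the shrinking move: one must engineer a single move that simultaneously preserves the convex-integral-path condition, strictly lowers the lattice count, and does not raise the $\Omega$-length, and one must control the move finely enough to land exactly at $\mc{L}=k+1$ rather than overshoot. I expect the cleanest way to handle this is to remove the lattice points one at a time---e.g.\ by cutting off a single primitive lattice triangle at an extreme vertex of $\Lambda$---so that each move decreases $\mc{L}$ by a controlled amount and convexity is manifestly preserved; everything else in the proof is a matter of matching notation to \cite{cg,beyond}.
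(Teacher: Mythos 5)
Your proposal is correct and is essentially identical to what the paper does: Theorem~\ref{thm:ckconvex} is not proved independently there but is quoted from \cite[Prop.\ 5.6]{beyond} and \cite[Cor.\ A.5]{cg}, with the discrepancy between $\mc{L}(\Lambda)=k+1$ and $\mc{L}(\Lambda)\ge k+1$ disposed of in a footnote by precisely the shrinking argument you describe (replacing portions of the path by chords, which cannot increase $\ell_\Omega$ since $\|\cdot\|_\Omega^*$ is a norm). If anything, you are more careful than the paper, which leaves both the reconciliation of the combinatorial and integral definitions of $\ell_\Omega$ and the one-lattice-point-at-a-time control of the shrinking move implicit.
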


\begin{proof}[Proof of Lemma~\ref{lem:convexub}.]
Given a positive integer $k$, let $r$ be the smallest real number such that the scaling $r\Omega$ contains at least $k+1$ lattice points. The boundary of the convex hull of $r\Omega\cap\Z^2$ consists of a segment on the $\mu_1$-axis, a segment on the $\mu_2$-axis, and a convex integral path $\Lambda$ with $\mc{L}(\Lambda)\ge k+1$. Thus by Theorem~\ref{eqn:ckconvex}, we have
\begin{equation}
\label{eqn:iss1}
c_k(X_\Omega) \le \ell_\Omega(\Lambda).
\end{equation}

Next observe that
\begin{equation}
\label{eqn:iss2}
\ell_\Omega(\Lambda) \le \ell_\Omega(\partial_+(r\Omega)).
\end{equation}
The reason is that $\Lambda$ can be obtained from $\partial_+(r\Omega)$ by a finite sequence of operations, each of which replaces a portion of a curve by a line segment with the same endpoints. These operations do not increase $\Omega$-length since $\|\cdot\|_{\Omega}^*$ is a norm.

By the equality case of Wulff's isoperimetric inequality \eqref{eqn:wulff}, we have
\[
\ell_\Omega(\partial_+(r\Omega)) = 2\sqrt{\op{Area}(\Omega)\op{Area}(r\Omega)}.
\]
By \eqref{eqn:volarea}, we can rewrite the above as
\begin{equation}
\label{eqn:iss3}
\ell_\Omega(\partial_+(r\Omega)) = 2\sqrt{\op{vol}(X_\Omega)\op{Area}(r\Omega)}.
\end{equation}

Next, a classical result of van der Korput, see the refinement by Chaix \cite{chaix}, asserts that if $R$ is a region in the plane with $C^2$ strictly convex boundary, then
\[
\left||R\cap\Z^2| - \op{Area}(R)\right| \le 10000(1+M)^{2/3},
\]
where $M$ denotes the maximum radius of curvature of $\partial R$. Taking $\epsilon>0$ small and applying this result to $R=(r-\epsilon)\widehat{\Omega}$, with the intersections with the axes appropriately smoothed, we find that there is a constant $C$, depending only on $\Omega$ and not on the positive integer $k$, such that
\[
\op{Area}(r\Omega) \le k - \frac{r}{2}(a(\Omega)+b(\Omega)) + Cr^{2/3}.
\]
In particular, since $\op{Area}(r\Omega)=r^2\op{vol}(X_\Omega)$, we get
\[
r = \sqrt{\frac{k}{\op{vol}(X_\Omega)}} + o(\sqrt{k}).
\]
Putting this into the previous inequality, we get
\begin{equation}
\label{eqn:iss4}
\op{Area}(r\Omega) \le k - \left(\frac{a(\Omega)+b(\Omega)}{2\sqrt{\op{vol}(X_\Omega)}}\right)\sqrt{k} + o(\sqrt{k}).
\end{equation}
Combining \eqref{eqn:iss1}, \eqref{eqn:iss2}, \eqref{eqn:iss3}, and \eqref{eqn:iss4}, we obtain
\[
\begin{split}
c_k(X_\Omega) &\le 2\sqrt{\op{vol}(X_\Omega)\left(k - \left(\frac{a(\Omega)+b(\Omega)}{2\sqrt{\op{vol}(X_\Omega)}}\right)\sqrt{k} + o(\sqrt{k})\right)}\\
&= 2\sqrt{\op{vol}(X_\Omega)k} - \frac{a(\Omega)+b(\Omega)}{2} + o(1).
\end{split}
\]
By equation \eqref{eqn:errorterm}, the lemma follows.
\end{proof}

We also have a ``dual'' version of Lemma~\ref{lem:convexub} for concave toric domains.

\begin{lemma}
\label{lem:concavelb}
Let $X_\Omega$ be a concave toric domain (see Definition~\ref{def:concavetd}) such that $\partial_+\Omega$ is the graph of a strictly convex $C^2$ function\footnote{Again, this is a bit more general than a ``strictly concave toric domain''.}. Then
\[
\lim\inf_{k\to\infty}e_k(X_\Omega) \ge -\frac{a(\Omega)+b(\Omega)}{2}.
\]
\end{lemma}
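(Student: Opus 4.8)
The plan is to mirror the proof of Lemma~\ref{lem:convexub}, but with all the inequalities reversed, using the lattice-path formula for ECH capacities of concave toric domains (the ``dual'' of Theorem~\ref{thm:ckconvex}). Recall from \cite{concave} that if $X_\Omega$ is a concave toric domain, then $c_k(X_\Omega) = \max\{\ell'_\Omega(\Lambda) \mid \mc{L}(\Lambda) \le k+1\}$, where now $\Lambda$ ranges over ``concave'' integral paths below $\partial_+\Omega$, $\mc{L}(\Lambda)$ counts lattice points in the closed region between $\Lambda$ and the axes, and $\ell'_\Omega$ is the appropriate notion of length (the dual norm is now built from $\widehat\Omega$ in the concave sense, and there is a sign/orientation bookkeeping issue analogous to \eqref{eqn:omegalength}). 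One must first quote this formula precisely in the form needed; this is the analogue of Theorem~\ref{thm:ckconvex} and is where I would be most careful to match conventions with \cite{concave,cg}.

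First I would, given a positive integer $k$, choose $r$ to be the \emph{largest} real number such that the scaled region $r\Omega$ contains \emph{at most} $k+1$ lattice points. The lattice points in $r\Omega$ then have a ``staircase'' upper boundary that is a concave integral path $\Lambda$ with $\mc{L}(\Lambda) \le k+1$, lying below $\partial_+(r\Omega)$. The lattice-path formula gives $c_k(X_\Omega) \ge \ell_\Omega(\Lambda)$, and since replacing portions of $\partial_+(r\Omega)$ by chords \emph{decreases} the relevant length in the concave case (again because we are integrating a norm, and the staircase path is obtained from $\partial_+(r\Omega)$ by such replacements), we get $\ell_\Omega(\Lambda) \ge \ell_\Omega(\partial_+(r\Omega))$ up to the bookkeeping — I would need to check the direction of this inequality carefully, as it is the mirror of \eqref{eqn:iss2}. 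Then the equality case of the Wulff isoperimetric inequality \eqref{eqn:wulff} (which holds for scalings of $\widehat\Omega$ by direct calculation) gives $\ell_\Omega(\partial_+(r\Omega)) = 2\sqrt{\op{vol}(X_\Omega)\op{Area}(r\Omega)}$, exactly as in \eqref{eqn:iss3}.

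Next I would invoke the van der Korput / Chaix lattice-point estimate applied to $(r+\epsilon)\widehat\Omega$ with corners smoothed, now extracting a \emph{lower} bound: $\op{Area}(r\Omega) \ge k - \frac{r}{2}(a(\Omega)+b(\Omega)) - Cr^{2/3}$ for a constant $C$ depending only on $\Omega$. Since $\op{Area}(r\Omega) = r^2\op{vol}(X_\Omega)$, this again forces $r = \sqrt{k/\op{vol}(X_\Omega)} + o(\sqrt k)$, and feeding this back yields $\op{Area}(r\Omega) \ge k - \left(\frac{a(\Omega)+b(\Omega)}{2\sqrt{\op{vol}(X_\Omega)}}\right)\sqrt k + o(\sqrt k)$, the mirror of \eqref{eqn:iss4}. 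Combining the chain $c_k(X_\Omega) \ge \ell_\Omega(\Lambda) \ge \ell_\Omega(\partial_+(r\Omega)) = 2\sqrt{\op{vol}(X_\Omega)\op{Area}(r\Omega)}$ with this lower bound on $\op{Area}(r\Omega)$ and expanding the square root gives $c_k(X_\Omega) \ge 2\sqrt{\op{vol}(X_\Omega)k} - \frac{a(\Omega)+b(\Omega)}{2} + o(1)$, which by \eqref{eqn:errorterm} is exactly the claimed $\liminf$ bound.

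The main obstacle I anticipate is not any single estimate — each individual ingredient is a sign-flip of something already in the proof of Lemma~\ref{lem:convexub} — but rather getting the combinatorial setup for concave toric domains exactly right: the precise statement of the lattice-path formula for $c_k$ in the concave case, the correct definition of $\ell_\Omega$ and of ``concave integral path'' with $\mc{L}(\Lambda) \le k+1$, and verifying that passing from $\partial_+(r\Omega)$ to the lattice staircase moves the length in the favorable direction. Once those conventions are pinned down (citing \cite{concave} and the appendix of \cite{cg}), the analytic part goes through verbatim by symmetry. A secondary point to double-check is that the Chaix estimate applies to the smoothed corner regions uniformly in $k$, but this is identical to the convex case and needs no new idea.
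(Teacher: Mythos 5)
Your overall strategy is the paper's: quote the concave counterpart of Theorem~\ref{thm:ckconvex} from \cite[Thm.\ 1.21]{concave} and run the proof of Lemma~\ref{lem:convexub} with the inequalities reversed. The lattice-point count (lower bound $\op{Area}(r\Omega)\ge k-\frac{r}{2}(a(\Omega)+b(\Omega))-Cr^{2/3}$ via van der Corput/Chaix applied to $(r+\epsilon)\widehat\Omega$) and the equality case of the isoperimetric identity for the scaled copy are handled correctly and match the paper.

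However, the step you flagged as needing care is where the argument as written breaks, and it does not break in your favor. In the concave case the length $\ell_\Omega$ is \emph{not} built from a norm: it is the integral of the ``anti-norm'' $[v]=\min\{\langle v,w\rangle\mid w\in\partial_+\Omega\}$, which is superadditive, so replacing an arc by its chord \emph{increases} $\ell_\Omega$. Consequently your $\Lambda$ is the wrong path: the staircase/hull of the lattice points \emph{inside} $r\Omega$ lies below $\partial_+(r\Omega)$ and satisfies $\ell_\Omega(\Lambda)\le\ell_\Omega(\partial_+(r\Omega))$, the reverse of what you need. (Worse, since $r\Omega$ is the hypograph of a convex function it is not convex, so the convex hull of its lattice points actually bulges \emph{above} $\partial_+(r\Omega)$, while the columnwise staircase is not a concave integral path at all; and any path built from interior lattice points is only within lattice-distance $O(1)$ of $\partial_+(r\Omega)$, so salvaging the comparison by shrinking to $\partial_+((r-C)\Omega)$ costs an additive constant $\sim 2C\op{Area}(\Omega)$ in $c_k$, which destroys exactly the constant $-\frac{a(\Omega)+b(\Omega)}{2}$ you are trying to identify.) The correct construction, which is what the paper does, is dual: take the lattice points of the nonnegative quadrant \emph{not} in $(r-\epsilon)\Omega$ — their region is genuinely convex because $\partial_+\Omega$ is the graph of a convex function — and let $\Lambda$ be the inner part of the boundary of their convex hull. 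Then the region enclosed by $\Lambda$ and the axes contains $(r-\epsilon)\Omega$ but strictly encloses at most $k$ lattice points, so $c_k(X_\Omega)\ge\ell_\Omega(\Lambda)$; and since $\Lambda$ lies weakly outside $(r-\epsilon)\Omega$, superadditivity of the anti-norm gives $\ell_\Omega(\Lambda)\ge\ell_\Omega(\partial_+((r-\epsilon)\Omega))=2(r-\epsilon)\op{Area}(\Omega)$, which is the inequality in the direction you need. With $\Lambda$ chosen this way (and the cap stated as $\mc{L}(\Lambda)\le k$ with lattice points on $\Lambda$ \emph{not} counted, rather than your $\le k+1$ with the closed region), the rest of your argument goes through.
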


\begin{proof}
This is proved similarly to Lemma~\ref{lem:convexub}, but with inequalities going in the reverse direction.

To start, there is a counterpart of Theorem~\ref{thm:ckconvex}, proved in \cite[Thm.\ 1.21]{concave}, which reads
\[
c_k(X_\Omega) = \max\{\ell_\Omega(\Lambda) \mid \mc{L}(\Lambda) \le k\}.
\]
Here $\Lambda$ is a {\em concave integral path\/}, which is a polygonal path with vertices at lattice points from $(0,b)$ to $(a,0)$ with $a,b\ge 0$ which is the graph of a convex function. In this context the $\Omega$-length $\ell_\Omega(\Lambda)$ is defined as in \eqref{eqn:omegalength}, but with the norm $\|\cdot\|_{\Omega}^*$ replaced by the ``anti-norm'' given by
\[
[v] = \min\{\langle v,w\rangle | w\in\partial_+\Omega\}.
\]
Finally, $\mc{L}(\Lambda)$ now denotes the number of lattice points in the region enclosed by $\Lambda$ and the axes, this time not including lattice points on $\Lambda$.

Given a positive integer $k$, let $r$ be the supremum of the set of real numbers such that the scaling $r\Omega$ contains at most $k$ lattice points. The boundary of the convex hull of the set of lattice points in the nonnegative quadrant but not in $(r-\epsilon)\Omega$ then consists of rays along the axes, together with a concave integral path $\Lambda$ satisfying $\mc{L}(\Lambda)\le k$. Thus
\[
c_k(X_\Omega) \ge \ell_\Omega(\Lambda).
\]
The rest of the proof now parallels the proof of Lemma~\ref{lem:convexub}.
\end{proof}

\subsection{Completing the proof of the main theorem}

\begin{proof}[Proof of Theorem~\ref{thm:main}.]
Let $X_\Omega$ be a strictly convex or strictly concave toric domain. By Proposition~\ref{prop:toricRuelle}, what we need to show is that
\begin{equation}
\label{eqn:mtnts}
\lim_{k\to\infty}e_k(X_\Omega) = -\frac{a(\Omega)+b(\Omega)}{2}.
\end{equation}
In the strictly concave case, this follows from Corollary~\ref{cor:concaveub} and Lemma~\ref{lem:concavelb}.

In the strictly convex case, by Lemma~\ref{lem:convexub}, we just need to show that
\begin{equation}
\label{eqn:jnts}
\lim\inf_{k\to\infty}e_k(X_\Omega) \ge -\frac{a(\Omega)+b(\Omega)}{2}.
\end{equation}
To do so, recall the notation $\Delta(c)$ from \S\ref{sec:weight}, and let $c$ be the smallest positive real number such that $\Omega\subset \Delta(c)$. Then $\partial_+\Omega$ intersects $\partial_+\Delta(c)$ in a unique point $(t,c-t)$. Suppose that $0<t<c$. (The cases where $t=0$ or $t=c$ are simpler and will be omitted.)

Let $\Omega'$ denote the closure of the component of $\Delta(c)\setminus\Omega$ with $\mu_1 < t$, and let $\Omega''$ denote the closure of the component of $\Delta(c)\setminus\Omega$ with $\mu_1 > t$. Define integral affine transformations $\phi',\phi'':\R^2\to\R^2$ by
\[
\begin{split}
\phi'(\mu_1,\mu_2) &= (c-\mu_1-\mu_2,\mu_1),\\
\phi''(\mu_1,\mu_2) &= (\mu_2,c-\mu_1-\mu_2).
\end{split}
\]
Then $X'=X_{\phi'(\Omega')}$ and $X''=X_{\phi''(\Omega'')}$ are concave toric domains satisfying the hypotheses of Corollary~\ref{cor:concaveub} and Lemma~\ref{lem:concavelb}, so that they satisfy \eqref{eqn:mtnts}. Observe also that
\[
\begin{split}
a(\phi'(\Omega')) &= c-b(\Omega),\\
b(\phi'(\Omega')) &= t,\\
a(\phi''(\Omega'')) &= c-t,\\
b(\phi''(\Omega'')) &= c - a(\Omega).
\end{split}
\]

By \cite[Thm.\ A.1]{cg}, we have
\begin{equation}
\label{eqn:fo1}
c_k(X_\Omega) = \inf_{k',k''\ge 0}\left(c_{k+k'+k''}(B(c)) - c_{k'}(X') - c_{k''}(X'')\right).
\end{equation}
By \eqref{eqn:mtnts} for $X'$ and $X''$ we get, as functions of $k'$ and $k''$,
\begin{equation}
\label{eqn:fo2}
\begin{split}
c_{k'}(X') &= 2\sqrt{k'\cdot\op{vol}(X')} + \frac{b(\Omega)-c-t}{2} + o(1),\\
c_{k''}(X'') &= 2\sqrt{k''\cdot\op{vol}(X'')} + \frac{a(\Omega)-2c+t}{2} + o(1).
\end{split}
\end{equation}
By \eqref{eqn:deviation}, we have
\begin{equation}
\label{eqn:fo3}
c_{k+k'+k''}(B(c)) \ge 2\sqrt{(k+k'+k'')\op{vol}(B(c))} - \frac{3c}{2} + o(1).
\end{equation}

Now since $\op{vol}(B(c)) = \op{vol}(X_\Omega) + \op{vol}(X') + \op{vol}(X'')$, by the Cauchy-Schwarz inequality (for three-component vectors) we have
\begin{equation}
\label{eqn:fo4}
\sqrt{(k+k'+k'')\op{vol}(B(c))} \ge \sqrt{k\op{vol}(X_\Omega)} + \sqrt{k'\op{vol}(X')} + \sqrt{k''\op{vol}(X'')}.
\end{equation}
Combining \eqref{eqn:fo1}, \eqref{eqn:fo2}, \eqref{eqn:fo3}, and \eqref{eqn:fo4}, we obtain
\[
\begin{split}
e_k(X_\Omega) &\ge \frac{-3c}{2} + \frac{-b(\Omega)+c+t}{2} + \frac{-a(\Omega)+2c-t}{2} + o(1)\\
&= -\frac{a(\Omega)+b(\Omega)}{2} + o(1).
\end{split}
\]
(Note that while the $o(1)$ terms in \eqref{eqn:fo2} are as functions of $k'$ and $k''$, we do get $o(1)$ terms as functions of $k$ above, since when $k$ is large, we must also have $k'$ and $k''$ large when close to the infimum in \eqref{eqn:fo1}, as in the proof of Lemma~\ref{lem:cs}.) This proves \eqref{eqn:jnts} for our strictly convex toric domain $X_\Omega$ and thus completes the proof of the theorem.
\end{proof}

\section{Improving the exponent in the general case}
\label{sec:exponent}

In this section we prove Theorem~\ref{thm:exponent}, estimating $e_k(X)$ for a general compact domain $X\subset\R^4$ with smooth boundary.

To prepare for this, if $a,b>0$, define the {\em polydisk\/}
\[
P(a,b) = \left\{z\in\C^2 \; \big| \; \pi|z_1|^2\le a^2, \; \pi|z_2|^2\le b^2\right\}.
\]
It was shown in \cite{qech} (and also follows directly from the more general Theorem~\ref{thm:ckconvex}) that the ECH capacities of a polydisk are given by
\begin{equation}
\label{eqn:ckp}
c_k(P(a,b)) = \min\left\{am+bm \;\big|\; (m+1)(n+1) \ge k+1\right\}
\end{equation}
where $m,n$ are nonnegative integers. We now need two simple estimates.

\begin{lemma}
\label{lem:ekp}
$e_k(P(a,a)) \ge -2a$ for all $k$.
\end{lemma}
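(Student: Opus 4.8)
The plan is to use the explicit formula \eqref{eqn:ckp} for the ECH capacities of the polydisk $P(a,a)$, which in this square case becomes $c_k(P(a,a)) = a\cdot\min\{m+n \mid (m+1)(n+1)\ge k+1\}$. Since $\op{vol}(P(a,a)) = a^2$ by \eqref{eqn:volarea} (the area of the square $[0,a^2]\times[0,a^2]$ is $a^4$... wait, I should be careful: $P(a,a)=X_\Omega$ with $\Omega=[0,a^2]\times[0,a^2]$, so $\op{vol}=a^4$? No — $P(a,b)$ has $\pi|z_i|^2\le a^2$, so in the $\mu$-coordinates $\mu_i=\pi|z_i|^2$ the region is $[0,a^2]\times[0,b^2]$ with area $a^2b^2$; hence $\op{vol}(P(a,a))=a^4$. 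Let me restate.) So $\op{vol}(P(a,a))=a^4$, and we must show $c_k(P(a,a)) \ge 2\sqrt{k\,a^4}-2a = 2a^2\sqrt{k}-2a$, i.e.\ $\min\{m+n : (m+1)(n+1)\ge k+1\} \ge 2a\sqrt{k}-2$. Hmm, that has an $a$ in a purely combinatorial quantity, which is wrong — so in fact $P(a,a)$ here must be normalized so that $\op{vol}=a^2$; rereading the definition, $\pi|z_i|^2\le a^2$ really does give area $a^2\cdot a^2 = a^4$. I'll just work with whatever normalization makes the claim dimensionally consistent and present the argument abstractly: the point is purely a lower bound on $m+n$ given $(m+1)(n+1)\ge k+1$.

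First I would reduce to a clean combinatorial inequality. By \eqref{eqn:ckp}, choose nonnegative integers $m,n$ realizing the minimum, so $c_k(P(a,a)) = a(m+n)$ and $(m+1)(n+1)\ge k+1 > k$. By the AM–GM inequality, $\sqrt{(m+1)(n+1)} \le \frac{(m+1)+(n+1)}{2} = \frac{m+n}{2}+1$, hence $m+n \ge 2\sqrt{(m+1)(n+1)} - 2 > 2\sqrt{k} - 2$. Therefore $c_k(P(a,a)) = a(m+n) > 2a\sqrt{k} - 2a$. Since $\op{vol}(P(a,a))$ equals $a^2$ in the normalization under which $e_k$ is defined (here I am using whichever convention makes \eqref{eqn:volarea} and the statement consistent), this reads $c_k(P(a,a)) > 2\sqrt{k\,\op{vol}(P(a,a))} - 2a$, which is exactly $e_k(P(a,a)) > -2a$, and in particular $e_k(P(a,a)) \ge -2a$.

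The key steps in order: (1) invoke \eqref{eqn:ckp} and pick an optimal pair $(m,n)$; (2) apply AM–GM to $(m+1)(n+1)$ to get $m+n \ge 2\sqrt{(m+1)(n+1)} - 2$; (3) use $(m+1)(n+1)\ge k+1 \ge k$ to conclude $m+n \ge 2\sqrt{k}-2$; (4) multiply by $a$ and rewrite via the definition \eqref{eqn:errorterm} of the error term together with $\op{vol}(P(a,a)) = a^2$.

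There is no real obstacle here — the statement is essentially a one-line consequence of AM–GM applied to the combinatorial description of $c_k$. The only thing to be careful about is bookkeeping of the normalization: making sure that the area/volume factor matches the form of $c_k$ in \eqref{eqn:ckp} so that the $-2a$ comes out with the right constant and no stray factor of $\sqrt{\op{vol}}$. Concretely, one checks $2\sqrt{k\,\op{vol}(P(a,a))} = 2a\sqrt{k}$ (in the convention being used), so that the inequality $c_k \ge a(m+n) > 2a\sqrt k - 2a$ is precisely $e_k(P(a,a)) > -2a$.
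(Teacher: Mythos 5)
Your proof is correct and follows essentially the same route as the paper: both arguments rest entirely on the combinatorial formula \eqref{eqn:ckp}. The only difference is in the final step: the paper first derives the explicit piecewise formula $c_k(P(a,a))=(2d-1)a$ or $2da$ according to whether $d^2\le k\le d^2+d$ or $d^2+d<k\le d^2+2d$, and then checks the two cases against $\sqrt{k}<d+\tfrac12$ and $\sqrt{k}<d+1$, whereas you get the needed bound $m+n\ge 2\sqrt{(m+1)(n+1)}-2\ge 2\sqrt{k+1}-2>2\sqrt{k}-2$ in one line from AM--GM applied to a minimizing pair $(m,n)$; this is a slightly cleaner way to reach the same conclusion. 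Regarding the normalization you worried about: the convention you settled on is the one the paper actually uses --- $\op{vol}(P(a,a))=a^2$, which is what is consistent with \eqref{eqn:ckp} and with the cubes of side $2^{-n}$ being identified with $P(4^{-n},4^{-n})$ in the proof of Theorem~\ref{thm:exponent} --- so the exponents in the displayed definition of $P(a,b)$ should be read as $\pi|z_1|^2\le a$, $\pi|z_2|^2\le b$, and your bookkeeping comes out right.
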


\begin{proof}
For each nonnegative integer $k$, there is a unique nonnegative integer $d$ such that
\[
d^2 \le k \le d^2 + 2d.
\]
It follows from \eqref{eqn:ckp} that
\begin{equation}
\label{eqn:array}
c_k(P(a,a)) = \left\{\begin{array}{cl} (2d-1)a, & d^2 \le k \le d^2+d,\\
2da, & d^2+d < k \le d^2+2d.
\end{array}\right.
\end{equation}
On the other hand, $\op{vol}(P(a,a))=a^2$, so
\begin{equation}
\label{eqn:ekp}
e_k(P(a,a)) = c_k(P(a,a)) - 2a\sqrt{k}.
\end{equation}
In the first line of \eqref{eqn:array} we have $\sqrt{k} < d+1/2$, and in the second line of \eqref{eqn:array} we have $\sqrt{k}<d+1$. The lemma then follows from \eqref{eqn:array} and \eqref{eqn:ekp}.
\end{proof}

\begin{lemma}
\label{lem:basest}
Let $X$ be a bounded domain in $\R^4$, and suppose there are disjoint open subsets $P_1,P_2,\ldots\subset X$ such that $P_i$ is symplectomorphic to $\op{int}(P(a_i,a_i))$. Let $k$ be a positive integer. Let
\[
I_k=\left\{i\mid a_i^2 \ge \op{vol}(X)/k\right\}
\]
and write
\[
V_k = \sum_{i\in I_k}a_i^2 = \op{vol}\left(\bigcup_{i\in I_k}P_i\right).
\]
Then
\begin{equation}
\label{eqn:basest}
e_k(X) \ge -2\sqrt{2}\sum_{i\in I_k}a_i + 2\frac{(V_k-\op{vol}(X))}{\sqrt{\op{vol}(X)}}\sqrt{k}.
\end{equation}
\end{lemma}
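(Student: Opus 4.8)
\textbf{Proof plan for Lemma~\ref{lem:basest}.}

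The strategy is to use the disjoint union property of ECH capacities to bound $c_k(X)$ from below by the ECH capacities of the sub-polydisks $P_i$ with $i \in I_k$, then apply the elementary estimate of Lemma~\ref{lem:ekp} to each term. First I would invoke monotonicity \eqref{eqn:monotone} together with the disjoint-union formula \eqref{eqn:du}: since $\bigcup_{i\in I_k} P_i \subset X$ with the $P_i$ disjoint and $P_i \cong \op{int}(P(a_i,a_i))$, and since the interior of a polydisk has the same ECH capacities as the polydisk, we get
\[
c_k(X) \ge c_k\left(\coprod_{i\in I_k} P(a_i,a_i)\right) = \sup_{\sum_{i\in I_k} k_i = k} \sum_{i\in I_k} c_{k_i}(P(a_i,a_i)).
\]
So it suffices to choose a single convenient allocation $\{k_i\}_{i\in I_k}$ of the ``budget'' $k$ among the indices in $I_k$ and estimate the resulting sum.

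The natural choice is to allocate $k_i$ proportional to the volume $a_i^2$ of $P_i$ relative to the total volume $\op{vol}(X)$, i.e.\ roughly $k_i \approx k\, a_i^2/\op{vol}(X)$ (taking floors, and noting that the discrepancy from rounding contributes only to lower-order terms which, since $I_k$ has size $O(k)$, I should track carefully but expect to absorb --- this is the one slightly delicate bookkeeping point). With this choice $\sum_{i\in I_k} k_i \le k$ as required. Then by Lemma~\ref{lem:ekp}, for each $i\in I_k$,
\[
c_{k_i}(P(a_i,a_i)) \ge 2a_i\sqrt{k_i} - 2a_i \ge 2a_i\sqrt{\frac{k a_i^2}{\op{vol}(X)}} - 2\sqrt{2}\,a_i + (\text{rounding error}) = \frac{2a_i^2\sqrt{k}}{\sqrt{\op{vol}(X)}} - 2\sqrt{2}\,a_i + \cdots,
\]
where I have used $\sqrt{k_i}\ge\sqrt{k a_i^2/\op{vol}(X)}-1$ to pull out the rounding loss and combined it with the $-2a_i$ term; the condition $i\in I_k$ (i.e.\ $a_i^2\ge \op{vol}(X)/k$) guarantees $k a_i^2/\op{vol}(X)\ge 1$ so $k_i\ge 1$ and the estimate is meaningful, and it also controls the rounding term by $\sqrt{2}a_i$ as needed. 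Summing over $i\in I_k$ and using $\sum_{i\in I_k} a_i^2 = V_k$ gives
\[
c_k(X) \ge \frac{2V_k\sqrt{k}}{\sqrt{\op{vol}(X)}} - 2\sqrt{2}\sum_{i\in I_k} a_i.
\]

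Finally I would convert this into the claimed bound on $e_k(X) = c_k(X) - 2\sqrt{k\,\op{vol}(X)}$ by subtracting $2\sqrt{k\,\op{vol}(X)}$ from both sides and rewriting
\[
\frac{2V_k\sqrt{k}}{\sqrt{\op{vol}(X)}} - 2\sqrt{k\,\op{vol}(X)} = 2\sqrt{k}\,\frac{V_k - \op{vol}(X)}{\sqrt{\op{vol}(X)}},
\]
which yields exactly \eqref{eqn:basest}. The main obstacle is not conceptual but the careful handling of the floor functions in the allocation $k_i = \floor{k a_i^2/\op{vol}(X)}$: one must verify that the per-term rounding loss is genuinely bounded by a constant times $a_i$ (using $i\in I_k$) rather than something that blows up when summed over the potentially large index set $I_k$, and that $\sum_{i\in I_k} k_i \le k$ after taking floors. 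Everything else is a direct application of the structural properties of ECH capacities already recorded in the excerpt together with Lemma~\ref{lem:ekp}.
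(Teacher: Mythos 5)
Your proposal follows essentially the same route as the paper: allocate $k_i=\floor{ka_i^2/\op{vol}(X)}$ to each $P_i$ (so that $\sum_i k_i\le k$ and $k_i\ge 1$ exactly for $i\in I_k$), invoke the disjoint union property together with Lemma~\ref{lem:ekp}, and convert back to $e_k$. The one place your bookkeeping is off is the rounding constant: the estimate $\sqrt{k_i}\ge\sqrt{\widehat{k_i}}-1$ yields only a per-term loss of $-4a_i$ (or $-(2+\sqrt{2})a_i$ with your claimed $\sqrt{2}\,a_i$ bound on the rounding term), not the stated $-2\sqrt{2}\,a_i$; to recover the constant in \eqref{eqn:basest} one should use that $k_i\ge 1$ gives $\sqrt{\widehat{k_i}}-\sqrt{k_i}\le\sqrt{k_i+1}-\sqrt{k_i}\le(\sqrt{2}+1)^{-1}=\sqrt{2}-1$, whence $2a_i\bigl(\sqrt{k_i}-\sqrt{\widehat{k_i}}\bigr)+e_{k_i}(P(a_i,a_i))\ge-2\sqrt{2}\,a_i$. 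This is a cosmetic repair rather than a gap, since any absolute constant here would suffice for the application to Theorem~\ref{thm:exponent}.
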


\begin{proof}
For each $i$ define a positive real number
\[
\widehat{k_i} = \frac{a_i^2}{\op{vol}(X)}k,
\]
and define a nonnegative integer
\[
k_i = \floor{\widehat{k_i}}.
\]
Note that $k_i>0$ if and only if $i\in I_k$.

By the disjoint union property of ECH capacities \eqref{eqn:du} and the definition of the error term \eqref{eqn:errorterm}, we have
\[
\begin{split}
c_k(X) & \ge \sum_ic_{k_i}(P(a_i,a_i))\\
& = \sum_{i\in I_k}\left(2a_i\sqrt{k_i} + e_k(P(a_i,a_i))\right)\\
&= 2\sum_{i\in I_k}a_i\sqrt{\widehat{k_i}} + \sum_{i\in I_k}\left(2a_i\left(\sqrt{k_i} - \sqrt{\widehat{k_i}}\right) + e_k(P(a_i,a_i))\right).
\end{split}
\]
By the definition of $\widehat{k_i}$, we have
\[
\sum_{i\in I_k}a_i\sqrt{\widehat{k_i}} = \frac{V_k}{\sqrt{\op{vol}(X)}}\sqrt{k}.
\]
And for each $i\in I_k$, by Lemma~\ref{lem:ekp} and the fact that $k_i\ge 1$, we have
\[
2a_i\left(\sqrt{k_i} - \sqrt{\widehat{k_i}}\right) + e_k(P(a_i,a_i) \ge -2\sqrt{2}a_i.
\]
Combining the above three lines gives
\[
c_k(X) \ge -2\sqrt{2}\sum_{i\in I_k}a_i + \frac{2V_k}{\sqrt{\op{vol}(X)}}\sqrt{k}.
\]
The lemma now follows from the definition of the error term \eqref{eqn:errorterm}.
\end{proof}

\begin{proof}[Proof of Theorem~\ref{thm:exponent}.]
We first prove the inequality
\begin{equation}
\label{eqn:desest}
e_k(X) \ge -Ck^{1/4}.
\end{equation}
Here and below, $C$ denotes a positive constant which depends only on $X$, but which may change from one line to the next.

To do so, we inductively define a sequence $P_1,P_2,\ldots$ as in \eqref{eqn:basest} as follows. Step 1 is to add all open cubes whose vertices are consecutive points on the half-integer lattice $\frac{1}{2}\Z^4$ that are contained in $X$. For $n>1$, Step $n$ is to add all open cubes whose vertices are consecutive points in the scaled lattice $2^{-n}\Z^4$ that are contained in $X$ but not contained in any of the cubes added in the first $n-1$ steps. Each cube added in Step $n$ is symplectomorphic to the open polydisk $\op{int}(P(4^{-n},4^{-n}))$.

Let $X_n$ denote the closure of the union of all cubes added in Steps $1$ to $n$. Then we have
\begin{equation}
\label{eqn:vxxn}
\op{vol}(X\setminus X_n) \le C \cdot 2^{-n}.
\end{equation}
 The reason is that by construction, any point in $X\setminus X_n$ is within distance $2^{1-n}$ of $\partial X$. And since $\partial X$ is assumed smooth, it follows that the volume of the set of points within distance $d$ of $\partial X$ is at most $C\cdot d$ when $d$ is small.

Let $m_n$ denote the number of cubes obtained in Step $n$. Since these cubes are disjoint and each have volume $16^{-n}$, it follows from \eqref{eqn:vxxn} that
\begin{equation}
\label{eqn:mnbound}
m_n \le C \cdot 8^n.
\end{equation}

Now suppose that
\begin{equation}
\label{eqn:kbound}
16^n \le \frac{k}{\op{vol}(X)} < 16^{n+1}.
\end{equation}
Then in the notation of Lemma~\ref{lem:basest}, the set $I_k$ consists of the indices of the cubes added in the first $n$ steps. By \eqref{eqn:mnbound}, we have
\[
\sum_{i\in I_k}a_i \le C\cdot 2^n.
\]
And by \eqref{eqn:vxxn}, we have
\[
\frac{V_k-\op{vol}(X)}{\sqrt{\op{vol}(X)}} \ge -C\cdot 2^{-n}.
\]
Putting the above three lines into \eqref{eqn:basest} gives
\[
e_k(X) \ge -C \cdot 2^n.
\]
By \eqref{eqn:kbound}, we obtain \eqref{eqn:desest}.

To complete the proof of the theorem, we need to prove the reverse inequality
\[
e_k(X) \le C \cdot k^{1/4}.
\]
To do so, we choose a large cube $W$ containing $X$, divide the complement $W\setminus X$ into cubes as above, and use a similar agument. (Compare \cite[Prop.\ 8.6]{qech}.)
\end{proof}

\section{Heuristics for the conjecture}
\label{sec:heuristics}

We now review some facts from embedded contact homology, and then use these to give a heuristic discussion of why we expect Conjecture~\ref{conj:main} to be true.

\subsection{Facts}

We first briefly review some notions from embedded contact homology. Let $Y$ be a homology $3$-sphere, and let $\lambda$ be a nondegenerate contact form on $Y$.

\begin{definition}
An {\em ECH generator\/} is a finite set of pairs $\alpha=\{(\alpha_i,m_i)\}$ where:
\begin{itemize}
\item
The $\alpha_i$ are distinct simple Reeb orbits.
\item
The $m_i$ are positive integers.
\item
If $\alpha_i$ is hyperbolic (meaning that the linearized return map of the Reeb flow along $\alpha_i$ has real eigenvalues) then $m_i=1$.
\end{itemize}
\end{definition}
Define the {\em symplectic action\/} of $\alpha$ to be the real number
\[
\mc{A}(\alpha) = \sum_im_i\mc{A}(\alpha_i).
\]
Here $\mc{A}(\alpha_i)$ denotes the symplectic action, or period, of the Reeb orbit $\alpha_i$.

Let $\tau$ be a trivialization of the contact structure $\xi$; this trivialization exists and is unique up to homotopy by our assumption that $Y$ is a homology sphere. If $\gamma$ is a Reeb orbit, define its rotation number
\[
\theta(\gamma) = \op{rot}_\tau(y,\mc{A}(\gamma)) = \mc{A}(\gamma)\rho(y).
\]
where $y$ is a point on the image of $\gamma$.

\begin{definition}
If $\alpha=\{(\alpha_i,m_i)\}$ is an ECH generator, define\footnote{This is a special case of the general definition of the ECH index in \cite[Def.\ 3.5]{bn}. The relative first Chern class term there is not present here because we are using a global trivialization $\tau$.}  its {\em ECH index\/} to be the integer
\begin{equation}
\label{eqn:I}
I(\alpha) = \sum_im_i^2\op{sl}(\alpha_i) + \sum_{i\neq j}m_im_j\ell(\alpha_i,\alpha_j) + \sum_i\sum_{k=1}^{m_i}\left(\floor{k\theta(\alpha_i)} + \ceil{k\theta(\alpha_i)}\right).
\end{equation}
Here $\ell(\alpha_i,\alpha_j)$ denotes the linking number of $\alpha_i$ and $\alpha_j$; and $\op{sl}(\alpha_i)$ denotes the self-linking number of the transverse knot $\alpha_i$, which is the linking number of $\alpha_i$ with a pushoff in the direction $\tau$, see \cite[\S3.5.2]{geiges}.
\end{definition}

If $(Y,\xi)$ is diffeomorphic to $S^3$ with the tight contact structure, then one can define the {\em ECH spectrum\/} of $(Y,\lambda)$, which is a sequence of real numbers $c_k(Y,\lambda)$ indexed by nonnegative integers $k$. The relevance for our discussion is that if $X$ is a nice star-shaped domain in $\R^4$, then its ECH capacities are defined by
\[
c_k(X) = c_k(\partial X,{\lambda_0}|_{\partial X}).
\]
And the key fact we need to know is that
\begin{equation}
\label{eqn:realize}
c_k(Y,\lambda) = \mc{A}(\alpha),
\end{equation}
where $\alpha$ is a certain ECH generator with ECH index
\[
I(\alpha)=2k,
\]
selected by a ``min-max'' procedure using the ECH chain complex.

We now want to look at the index formula \eqref{eqn:I} more closely. To prepare for this we need a bit more background. Choose an auxiliary metric on $Y$. If $y\in Y$ and $T>0$, we can form a loop $\eta_{y,T}$ by starting with the path given by the time $t$ Reeb flow from $y$ to $\phi_T(y)$, and then appending a length-minizing geodesic from $\phi_T(y)$ back to $y$. (If this geodesic is not unique, pick one arbitrarily.) If $y_1,y_2$ are distinct, define the {\em asymptotic linking number\/} by
\[
f(y_1,y_2) = \lim_{T_1,T_2\to\infty} \frac{1}{T_1T_2} \ell(\eta_{y_1,T_1},\eta_{y_2,T_2}),
\]
when this limit exists. Here of course $\ell(\eta_{y_1,T_1},\eta_{y_2,T_2})$ is defined only when the loops $\eta_{y_1,T_1}$ and $\eta_{y_2,T_2}$ are disjoint. By a result of Arnold \cite{arnold} and Vogel \cite{vogel} (which applies to more general volume-preserving vector fields), the function $f$ is defined almost everywhere on $Y\times Y$ and integrable, and
\begin{equation}
\label{eqn:Arnold}
\int_{Y\times Y}f = \op{vol}(Y,\lambda).
\end{equation}
Here we are integrating with respect to the measure on $Y\times Y$ given by the product of the contact volume forms $\lambda\wedge d\lambda$, and we define $\op{vol}(Y,\lambda) = \int_Y \lambda\wedge d\lambda$.

For example, if $y_1$ and $y_2$ are on distinct simple Reeb orbits $\gamma_1$ and $\gamma_2$, then it follows from the definition that
\[
f(y_1, y_2) = \frac{1}{\mc{A}(\gamma_1)\mc{A}(\gamma_2)}\ell(\gamma_1,\gamma_2).
\]
If $y_1$ and $y_2$ are on the same simple Reeb orbit $\gamma$, then $f(y_1,y_2)$ is not defined; however it is natural to extend the definition in this case to set
\[
f(y_1,y_2) = \frac{1}{\mc{A}(\gamma)^2}\left(\op{sl}(\gamma) + \theta(\gamma)\right).
\]

Using the above formulas, we can rewrite the index formula \eqref{eqn:I} as
\begin{equation}
\label{eqn:Irewrite}
I(\alpha) = \sum_{i,j}m_im_j\mc{A}_i\mc{A}_jf_{i,j} - \sum_im_i^2\mc{A}_i\rho_i + \sum_i\sum_{k=1}^{m_i}\left(\floor{k\mc{A}_i\rho_i} + \ceil{k\mc{A}_i\rho_i}\right).
\end{equation}
Here we write $\mc{A}_i=\mc{A}(\alpha_i)$; we let $f_{i,j}$ denote $f(y_i,y_j)$ for $y_i$ in the image of $\alpha_i$ and $y_j$ in the image of $\alpha_j$; and $\rho_i$ denotes $\rho(y)$ for $y$ in the image of $\alpha_i$. 

\subsection{A new definition}

\begin{definition}
If $\alpha=\{(\alpha_i,m_i)\}$ is an ECH generator, then using the notation of \eqref{eqn:Irewrite}, define its {\em approximate ECH index\/} to be the real number
\begin{equation}
\label{eqn:Iapprox}
I_{\op{approx}}(\alpha) = \sum_{i,j}m_im_j\mc{A}_i\mc{A}_jf_{i,j} + \sum_im_i\mc{A}_i\rho_i.
\end{equation}
\end{definition}

We can bound the error in this approximation as follows:

\begin{lemma}
\label{lem:Iapprox}
$|I_{\op{aprox}}(\alpha) - I(\alpha)| \le \sum_im_i$.
\end{lemma}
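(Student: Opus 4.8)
The plan is to compare the two formulas \eqref{eqn:Irewrite} and \eqref{eqn:Iapprox} term by term. Subtracting, the doubly-indexed sums $\sum_{i,j}m_im_j\mc{A}_i\mc{A}_jf_{i,j}$ cancel exactly, so what remains is
\[
I(\alpha) - I_{\op{approx}}(\alpha) = \sum_i\sum_{k=1}^{m_i}\left(\floor{k\mc{A}_i\rho_i} + \ceil{k\mc{A}_i\rho_i}\right) - \sum_im_i^2\mc{A}_i\rho_i - \sum_im_i\mc{A}_i\rho_i.
\]
Since the whole expression is a sum over $i$, it suffices to bound the $i$-th summand by $m_i$ and then sum; so I would fix a single Reeb orbit $\alpha_i$ and write $\theta = \mc{A}_i\rho_i = \theta(\alpha_i)$ and $m = m_i$, reducing to the elementary claim that
\[
\left|\sum_{k=1}^{m}\left(\floor{k\theta} + \ceil{k\theta}\right) - (m^2 + m)\theta\right| \le m.
\]

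To prove this one-variable inequality, I would use the basic estimates $k\theta - 1 < \floor{k\theta} \le k\theta$ and $k\theta \le \ceil{k\theta} < k\theta + 1$ (with equality forcing $k\theta\in\Z$), so that $\floor{k\theta}+\ceil{k\theta}$ lies in the closed interval $[2k\theta - 1, 2k\theta + 1]$; in fact, since $\floor{x}+\ceil{x}$ equals $2x$ if $x\in\Z$ and lies strictly between $2x-1$ and $2x+1$ otherwise, each term $\floor{k\theta}+\ceil{k\theta} - 2k\theta$ lies in $(-1,1)$. Summing over $k=1,\ldots,m$ gives that $\sum_{k=1}^m(\floor{k\theta}+\ceil{k\theta}) - 2\theta\sum_{k=1}^m k$ has absolute value at most $m$. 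Since $2\sum_{k=1}^m k = m^2+m$, this is exactly the claimed bound for the single orbit. (One has to be slightly careful that the bound is $\le m$ rather than $< m$, but that is fine since each term is bounded in absolute value by $1$, strictly unless $k\theta\in\Z$.)

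Summing the per-orbit bound over $i$ yields $|I(\alpha) - I_{\op{approx}}(\alpha)| \le \sum_i m_i$, which is the statement of the lemma. I do not anticipate a genuine obstacle here: the proof is a direct manipulation of the two displayed index formulas followed by the elementary floor/ceiling estimate above. The only mild subtlety worth spelling out is the cancellation of the $f_{i,j}$ terms and the bookkeeping identity $\sum_i m_i^2\mc{A}_i\rho_i + \sum_i m_i\mc{A}_i\rho_i = \sum_i(m_i^2+m_i)\theta(\alpha_i)$, which matches the $2\theta\sum_{k=1}^m k$ coming out of the floor-and-ceiling sum; once that alignment is made explicit the estimate is immediate.
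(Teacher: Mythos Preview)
Your proof is correct and follows essentially the same approach as the paper: subtract \eqref{eqn:Irewrite} from \eqref{eqn:Iapprox}, cancel the $f_{i,j}$ terms, use $\sum_{k=1}^{m_i}2k=m_i^2+m_i$ to rewrite the remaining difference as $\sum_i\sum_{k=1}^{m_i}\bigl(2k\mc{A}_i\rho_i - \floor{k\mc{A}_i\rho_i} - \ceil{k\mc{A}_i\rho_i}\bigr)$, and then bound each of the $\sum_i m_i$ summands by the elementary inequality $\bigl|2x - \floor{x} - \ceil{x}\bigr| < 1$.
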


\begin{proof}
It follows from \eqref{eqn:Irewrite} and \eqref{eqn:Iapprox} that
\[
I_{\op{approx}}(\alpha) - I(\alpha) = \sum_i\sum_{k=1}^{m_i}\left(2k\mc{A}_i\rho_i - \floor{k\mc{A}_i\rho_i} - \ceil{k\mc{A}_i\rho_i}\right).
\]
The lemma then follows since
\[
\big|2x - \floor{x} - \ceil{x}\big| < 1
\]
for every real number $x$.
\end{proof}

We can now suggestively rewrite \eqref{eqn:Iapprox} as
\begin{equation}
\label{eqn:suggestive}
I_{\op{approx}}(\alpha) = \int_{\alpha\times\alpha}f + \int_\alpha\rho
\end{equation}
where the integral is with respect to the measure given by the Reeb vector field, multiplied by $m_i$ on each orbit $\alpha_i$.

\subsection{Heuristics}

A conjecture of Irie \cite{irie2}, of which a version has been verified for convex and concave toric domains, asserts that if $\lambda$ is generic, then ECH generators $\alpha$ realizing $c_k(Y,\lambda)$ as in \eqref{eqn:realize} are {\em equidistributed\/} in $Y$ as $k\to\infty$. This means that if $U\subset Y$ is an open set, then the symplectic action of $\alpha\cap U$ divided by the symplectic action of $\alpha$ converges to $\op{vol}(U)/\op{vol}(Y)$. If we assume a very favorable version of this equidistribution, then by Lemma~\ref{lem:Iapprox} and equation \eqref{eqn:suggestive} we can approximate
\[
2k = I(\alpha) \approx I_{\op{approx}}(\alpha) \approx \frac{\mc{A}(\alpha)^2}{\op{vol}(Y,\lambda)^2}\int_{Y\times Y}f + \frac{\mc{A}(\alpha)}{\op{vol}(Y,\lambda)}\int_Y\rho.
\]
Here we are not discussing the size of the error in the approximation since this is just a heuristic. Comparing with \eqref{eqn:Rurot} and \eqref{eqn:Arnold}, we obtain
\[
2k\cdot\op{vol}(Y,\lambda) \approx \mc{A}(\alpha)^2 + \mc{A}(\alpha)\op{Ru}(Y,\lambda).
\]
Since $\mc{A}(\alpha) = c_k(Y,\lambda)$, we then get
\[
c_k(Y,\lambda) \approx \sqrt{2k\cdot\op{vol}(Y,\lambda)} - \frac{1}{2}\op{Ru}(Y,\lambda).
\]
When $X$ is a nice star-shaped domain, we have $\op{vol}(\partial X,{\lambda_0}|_{\partial X}) = 2\op{vol}(X)$ by Stokes's theorem, so we obtain
\[
c_k(X) \approx 2\sqrt{k\cdot\op{vol}(X)} - \frac{1}{2}\op{Ru}(X).
\]

\end{document}